% submitted to TRAMS, 11 May 2007
% Written in LaTeX2e
% updated May 2007

\documentclass[a4paper,10pt]{amsart}
\usepackage{amsmath,amssymb,amsthm,amscd}

% =============================================================================
% --- Abbreviations

% --- Categories
\newcommand{\add}{\mathrm{add}}
\newcommand{\Add}{\mathrm{Add}}
\newcommand{\Prod}{\mathrm{Prod}}
\newcommand{\Gen}{\mathrm{Gen}}
\newcommand{\rmod}{\mathrm{Mod-}}
\newcommand{\rfmod}{\mathrm{mod-}}

% --- Rings

% --- Functors and operators
%\DeclareMathOperator{\Hom}{Hom}
\DeclareMathOperator{\End}{End}
\DeclareMathOperator{\Ker}{Ker}
\DeclareMathOperator{\Img}{Im}

\newcommand{\mapr}[1]{\xrightarrow{#1}}

\newcommand{\exs}[5]{0\rightarrow #1 \mapr{#2} #3 \mapr{#4} #5 \rightarrow 0}

\newcommand{\Ext}[3]{\mbox{Ext}^1_{#1}\,(#2,#3)}

\newcommand{\Exti}[4]{\mbox{Ext}^{#1}_{#2}\,(#3,#4)}

\newcommand{\Tor}[3]{\mbox{Tor}_1^{#1}\,(#2,#3)}

\DeclareMathOperator{\HomOp}{Hom}

\newcommand{\Hom}[3]{\HomOp_{#1}(#2,#3)}

%%%%%%%%%%%%%%%%%%%%%%%

\newcommand{\Fcal}{\ensuremath{\mathcal{F}}}

\newcommand{\Scal}{\ensuremath{\mathcal{S}}}

\newcommand{\p}{\ensuremath{\mathbf{p}}}
\newcommand{\q}{\ensuremath{\mathbf{q}}}
\newcommand{\tube}{\ensuremath{\mathbf{t}}}

%%%%%%%%%%%%%%%%%%%%%%%%%%%%%%%%%%%%%%%%%%%%%%%

% =============================================================================
% --- Lemmas, propositions, theorems etc.
\theoremstyle{plain}
\newtheorem{thm}{Theorem}
\newtheorem{prop}[thm]{Proposition}
\newtheorem{lem}[thm]{Lemma}
\newtheorem{cor}[thm]{Corollary}
\newtheorem{ex}[thm]{Example}
\theoremstyle{definition}

\theoremstyle{remark}
\newtheorem*{rem}{Remark}

% =============================================================================
\begin{document}
\title{Baer and Mittag-Leffler modules over tame hereditary algebras}
\author{\textsc{Lidia Angeleri H\" ugel}}
\address{Dipartimento di Informatica e
Comunicazione\\ Universit\`a degli Studi dell'Insubria\\
Via Mazzini 5, I - 21100 Varese, Italy}
\email{lidia.angeleri@uninsubria.it}
\author{\textsc{Dolors Herbera}}
\address{Departament de Matem\`atiques\\
Universitat Aut\`onoma de Barcelona\\E-08193 Be\-lla\-te\-rra
(Barcelona), Spain} \email{dolors@mat.uab.cat}
\author{\textsc{Jan Trlifaj}}
\address{Charles University, Faculty of Mathematics and Physics, Department of Algebra \\
Sokolovsk\'{a} 83, 186 75 Prague 8, Czech Republic}
\email{trlifaj@karlin.mff.cuni.cz}
\thanks{Part of this research was done during the visit of the authors to CRM Barcelona in September 2006
supported by the Research Programme on Discrete and Continuous Methods of Ring Theory. First author partially
supported by  Universit\`a di Padova, Progetto di Ateneo
CDPA048343, and by PRIN 2005 "Prospettive in teoria degli anelli, algebre di Hopf e categorie di moduli".
First two authors partially supported by the DGI and the
European Regional Development Fund, jointly, through Project
 MTM2005--00934, and by the Comissionat per Universitats i Recerca
of the Generalitat de Ca\-ta\-lunya, Project 2005SGR00206. Third author
 acknowledges support by GA\v CR 201/06/0510 and MSM 0021620839.}
%\subjclass[2000]{...{primary), ... (secondary)}
\date{\today}

% = Abstract ========================================================
\begin{abstract} We develop a structure theory for two classes of infinite dimensional modules over tame hereditary algebras: the Baer modules, and the Mittag--Leffler ones. A right $R$--module $M$ is called {\em Baer} if $\Ext RMT = 0$ for all torsion modules $T$, and $M$ is {\em Mittag--Leffler} in case the canonical map $M\otimes_R \prod _{i\in I}Q_i\to \prod _{i\in I}(M\otimes_RQ_i)$ is injective where $\{Q_i\}_{i\in I}$ are arbitrary left $R$--modules.

We show that a module $M$ is Baer iff $M$ is $\p$--filtered where $\p$ is the preprojective component of the tame hereditary algebra $R$. We apply this to prove that the universal localization of a Baer module is projective in case we localize with respect to a complete tube. Using infinite dimensional tilting theory we then obtain a structure result showing that Baer modules are more complex then the (infinite dimensional) preprojective modules. In the final section, we give a complete classification of the Mittag--Leffler modules.
\end{abstract}

\maketitle
%\vspace{4ex}

Since the fundamental work of Ringel \cite{R}, the study of infinite dimensional modules has become one of the challenging
tasks of the representation theory of finite dimensional hereditary algebras. In the present paper, we consider in detail
two classes of infinite dimensional modules over tame hereditary algebras: the Baer modules, and the Mittag-Leffler ones.

Besides proving structure results, we investigate further the surprising analogy with modules over Dedekind domains discovered in \cite{R}.
Indeed, the current progress relies heavily on applications of the recent powerful set--theoretic and homological methods developed originally for modules over domains, cf.\ \cite{ABH} and \cite{GT}.

\medskip

Baer modules can be defined in a rather general setting \cite{O3}:
Let $R$ be a ring and $\mathcal T$ be a torsion class in $\rmod
R$. A module $M \in \rmod R$ is a {\em Baer module} for $\mathcal
T$ provided that $\Ext RMT = 0$ for all $T \in \mathcal T$.

In the particular case when $R = \mathbb Z$, and more generally when $R$ is a integral domain and
$\mathcal T$ is the class of all torsion $R$-modules, we obtain thus the classical notions of a
Baer group \cite{B} and a Baer module \cite{K}. It took quite a long time to prove that these
notions actually coincide with the well--known notions of a free group and a projective module,
respectively. Countable Baer groups were shown to be free already in 1936 by Baer \cite{B},
but the arbitrary ones only in 1969 by Griffith \cite{G}. The projectivity of the classical Baer
modules was also shown in two steps spreading over decades, however, in a different order. First,
a reduction to countably presented modules was proved by set--theoretic methods by Eklof, Fuchs
and Shelah \cite{EFS} in 1990. Finally, the countably presented case has recently been settled in
\cite{ABH}.

In the present paper, we consider Baer modules over tame hereditary algebras, so our $\mathcal T$
is the class of all torsion modules in the sense of Ringel \cite{R} (see below for unexplained
terminology). Baer modules in this sense have been studied since 1980's, notably by Okoh and Lukas
in \cite{O2}, \cite{O3}, and \cite{L2}. Their focus was on countably presented modules, but their
results already indicated the complexity of the general case.

Here we take a different approach. First, in Section 1, we apply set--theoretic methods to prove
a reduction to finitely presented modules. We infer that the Baer modules are precisely the modules filtered by the finitely generated preprojective modules (Theorem \ref{characterization}).
This reveals the main difference from the integral domain case: the role of the finitely generated
projectives is taken by the much more complex class of the finitely generated preprojectives.

However, as shown in Section 2, the reduction is still sufficient to imply that Baer modules are locally projective, in the sense that their
universal localizations, as introduced by Schofield in \cite{S1}, are projective in case we localize with respect to a set of
simple regular modules containing a complete clique (Theorem \ref{universal}).

In Section 3 we employ infinite dimensional tilting  theory in order to investigate the structure of the  infinite dimensional torsion--free,
and Baer modules. The key role is played here by an infinite dimensional tilting module,  the so called Lukas tilting module $L$.
It is a module which has no finite dimensional direct summands, not even pure-injective direct summands,
but it is noetherian when viewed as a module over its endomorphism ring (Corollary \ref{matrix}).

Baer modules are characterized as the kernels of homomorphisms between modules in $\Add L$, and torsion-free modules as the pure--epimorphic images of Baer modules with kernels in $\Add L$ (Corollaries \ref{corol} and \ref{corola}).

We recover Lukas' theorem on the structure of countably presented Baer modules \cite{L2}, and
extend it to arbitrary Baer modules (Corollary \ref{general}); then we establish  a bijective correspondence between equivalence classes of
Baer modules and isomorphism classes of (infinite dimensional) preprojective modules (Corollary \ref{corresp}).

\medskip

Also the Mittag-Leffler modules can be defined in a very general
setting \cite{rothmaler}, \cite{relml}: Given a class of left
$R$--modules $\mathcal Q$, we call a  module $M$ {\em ${\mathcal
Q}$-Mittag-Leffler} if the canonical map $M\otimes_R \prod _{i\in
I}Q_i\to \prod _{i\in I}(M\otimes_RQ_i)$ is injective for any family
$\{Q_i\}_{i\in I}$ of left $R$--modules in ${\mathcal Q}$. For
${\mathcal Q}=R$-Mod, this is just the well--known notion of a {\em
Mittag--Leffler module} from \cite{RG}.

If $R$ is a tame hereditary algebra then the connection with Baer modules is that every Baer module $M$ is $\mathcal C$--Mittag--Leffler where
$\mathcal C$ denotes the class of all left $R$-modules without non--zero preinjective direct summands.

For artin algebras, it is known that Mittag--Leffler modules
coincide with the {\em separable} modules, that is, the modules $M$
whose finite subsets are always contained in  finitely presented direct
summands \cite{Z}; we observe that they also coincide with
the strict Mittag-Leffler modules, that is, with locally
pure--projective modules in the sense of \cite{Az3}. We prove that in
the setting of tame hereditary algebras, the torsion--free separable
modules coincide with the pure submodules of products of
indecomposable preprojective modules (Proposition \ref{strict}),
while the torsion reduced separable modules coincide with the
locally split epimorphic images of  direct sums of indecomposable
finitely generated regular modules (Proposition  \ref{reducedML}).

Then we prove the main result of Section 4, a structure theorem for Mittag-Leffler modules (Theorem \ref{culmin}).

%%%%%%%%%%%%%%%%%%%%%%%%%%%%%%%%%%%%%%%%%%%%%%%%%%%%%%%%%%%%%%%%%%%%%%%%%%%%%%%%%%%%%%%%

\bigskip

All our rings are unital, and by the unadorned term ($R$-)module we
mean  a right  module over the ring $R$.

We will use the following notation. For a class of modules $\mathcal C$, we define

$$^o \mathcal C = \{ M \in \rmod R \mid \Hom RMC = 0 \mbox{ for all } C \in
\mathcal C \},$$

$$^{\perp_1} \mathcal C = \{ M \in \rmod R \mid \Ext RMC = 0 \mbox{ for all } C \in
\mathcal C \}$$

and

$${^\perp} \mathcal C = \{ M \in \rmod R \mid \Exti iRMC = 0 \mbox{ for all } C \in
\mathcal C \mbox{ and all } i > 0 \}.$$

Similarly, the classes $\mathcal C ^o$, $\mathcal C ^{\perp_1}$ and $\mathcal C ^\perp$ are
defined.

\medskip

We will need some notions related to purity. A monomorphism $\iota: N\to M$ is said to be
 {\em s-pure} (or {\em strongly pure} or {\em locally split})
if for any finite subset $S \subseteq N$ there is an $R$-homomorphism $\pi : M \to N$ such that $\pi \iota(x) = x$
for each $x \in S$. According to \cite{Z}, a module $M$ is said to be {\em locally pure-injective} if every
pure-monomorphism $M\to X$ with $X\in \text{Mod-}R$ is s-pure.

An {\em s-pure submodule} $N$ of a module $M$  is a submodule with
the property that the embedding $\iota: N\to M$ is s-pure. Replacing
the term ``finite subset" by ``countable subset", we obtain the
notion of a {\em c-pure} submodule of a module $M$. Clearly, we have
the implications: $N$ is a direct summand in $M$ $\Rightarrow$ $N$
is c-pure in $M$ $\Rightarrow$ $N$ is s-pure in $M$ $\Rightarrow$
$N$ is pure in $M$.

We will also use the dual notions.
 A homomorphism $\pi:X\to M$ is a
{\it locally split epimorphism}
if for each finite subset $F\subseteq M$ there is a map $\varphi=\varphi_F:M\to X$ such that $m=\pi\varphi(m)$ for all $m\in F$.
A module $M$ is said to be \emph{strict Mittag-Leffler} (or {\em locally pure-projective}) if every pure-epimorphism $X\to M$ with $X\in \text{Mod-}R$
is locally split,
see \cite{RG} and \cite{Az3}.

\medskip

Finally, recall that a module $T$ is said to be  a ($1$-) {\em tilting module} if it
 satisfies

  (T1) proj.dim$(T) \leq 1$;

  (T2) $\Ext RT{T^{(I)}} = 0$ for each set $I$; and

  (T3) there is an exact sequence
$0 \to R \to T_0 \to T_1 \to 0$ where $T_0$ and $T_1$ are direct summands of a (possibly infinite) direct sum of copies of $T$.

\bigskip

\section{Baer modules and $\p$--filtrations}

We start with a  general setting  and show that
set--theoretic methods allow  to reduce the structure of Baer
modules to the countably generated ones.

Let $\sigma$ be an ordinal. An increasing chain of submodules,
$\mathcal M = ( M_\alpha \mid \alpha \leq \sigma )$, of a module
$M$ is called a {\em filtration} of $M$ provided that $M_0 = 0$,
$M_\alpha = \bigcup_{\beta < \alpha} M_\beta$ for all limit
ordinals $\alpha \leq \sigma$ and $M_\sigma = M$.

Given a class of modules $\mathcal C$ and a module $M$, a filtration $\mathcal M$ is a {\em $\mathcal
C$--filtration} of $M$ provided that $M_{\alpha + 1}/M_\alpha$ is isomorphic to some element of
$\mathcal C$ for each $\alpha < \sigma$. In this case we say that $M$ is {\em $\mathcal C$--filtered}.

Given a cardinal $\kappa$, a filtration $\mathcal M$ is a {\em $\kappa$--filtration} of $M$
provided that $M_{\alpha}$ is $< \kappa$--generated for each $\alpha < \sigma$.

\begin{thm}\label{reduction}
Let $R$ be an $\aleph_0$--noetherian ring and $\mathcal T$ be a torsion class in $\rmod R$ such
that $^{\perp_1} \mathcal T = {}^\perp \mathcal T$. Assume that either $\mathcal T$ consists of
modules of finite injective dimension, or ${}^\perp \mathcal T$ consists of modules of finite
projective dimension. Then the following conditions are equivalent for any module $M$:

\begin{enumerate}

\item $M$ is a Baer module for $\mathcal T$.

\item $M$ has a filtration $\mathcal M = ( M_\alpha \mid \alpha \leq \kappa )$ such that,
for each $\alpha < \kappa$,
$M_{\alpha + 1}/M_\alpha$ is a countably generated Baer module for $\mathcal T$.

\end{enumerate}
\end{thm}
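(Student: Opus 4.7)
The direction $(2)\Rightarrow(1)$ is the easy one: the class $^{\perp_1}\mathcal T$ of Baer modules is closed under transfinite extensions by Eklof's lemma, so any module admitting a filtration by Baer submodules is itself Baer.

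For $(1)\Rightarrow(2)$, my plan is to establish a \emph{countable enlargement property}: for every Baer module $M$ and every countable subset $X\subseteq M$, there exists a countably generated submodule $N\supseteq X$ of $M$ such that both $N$ and $M/N$ are Baer. A transfinite induction that at each successor stage absorbs one new generator will then produce a continuous filtration $(M_\alpha \mid \alpha\le\kappa)$ of $M$ whose factors $M_{\alpha+1}/M_\alpha$ are countably generated and Baer, yielding~(2).

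To prove the enlargement property, I would first replace the proper class $\mathcal T$ by a set $\mathcal S$ of countably presented test modules with $^\perp\mathcal S = {}^\perp\mathcal T$. The hypotheses are tailored for this: in the ``$\mathcal T$ has finite injective dimension'' branch one constructs $\mathcal S$ via a cotilting-style cogenerator argument, exploiting that a bounded injective coresolution reduces the test to countably many data points; in the ``$^\perp\mathcal T$ has finite projective dimension'' branch, $\mathcal S$ can be taken to be a generating set of $\mathcal T$, since the projective-dimension bound forces higher Ext against any $T\in\mathcal T$ to vanish automatically. The $\aleph_0$-noetherian hypothesis is what keeps the test modules countably presented. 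Then, in a standard Hill-lemma construction, the groups $\Ext R{\langle X\rangle}S$ for $S\in\mathcal S$ are computable from countably presented partial resolutions, and each obstruction is killed by adjoining countably many witnesses in $M$ to $X$; a back-and-forth iteration produces a countably generated $N\supseteq X$ with $\Ext R{M/N}S=0$ for every $S\in\mathcal S$, so $M/N$ is Baer. The identification $^{\perp_1}\mathcal T={}^\perp\mathcal T$ upgrades this to $\Exti iR{M/N}T=0$ for all $i>0$ and $T\in\mathcal T$, and the long exact sequence attached to $0\to N\to M\to M/N\to 0$ transfers Baerness from $M$ to $N$.

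The main obstacle is the construction of the countable test set $\mathcal S$: this is the decisive use of the finite-dimension hypothesis, since without it there is no a priori countable bound on the data needed to witness Ext-vanishing. A secondary technical point is keeping each enlargement countably generated, which rests on $\aleph_0$-noetherianness so that the ambient Ext computations can be performed with countably presented resolutions; once both are in place, the back-and-forth enlargement itself is routine.
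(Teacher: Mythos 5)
Your direction $(2)\Rightarrow(1)$ is exactly the paper's (Eklof's Lemma), and the general shape of your $(1)\Rightarrow(2)$ --- a countable enlargement property fed into a transfinite induction --- is the skeleton of every deconstruction argument of this kind. The genuine gap is the step you yourself flag as ``the main obstacle'': replacing the proper class $\mathcal T$ by a \emph{set} $\mathcal S$ of countably presented modules with ${}^\perp\mathcal S={}^\perp\mathcal T$. Neither of your two sketches establishes this, and in the situation the theorem is applied to it is not available in the form you want. Over a tame hereditary algebra with $\mathcal T=\Gen\tube$ one has ${}^\perp\Gen\tube\subsetneq{}^\perp\tube$ even though $\tube$ generates the torsion class: $\Exti 1RM{-}$ does not commute with direct sums in the second variable, so Ext-vanishing into a generating set does not control Ext-vanishing into arbitrary direct sums of generators. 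Your ``generating set'' argument in the projective-dimension branch founders on exactly this point, and the ``cotilting-style cogenerator'' in the injective-dimension branch produces a single large pure-injective cogenerator, not countably presented test modules. Tellingly, your proof never uses the hypothesis that $\mathcal T$ is a torsion class; its closure under direct sums is precisely what the paper substitutes for your test-set reduction.

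Concretely, the paper handles arbitrary test modules as follows: given a $\lambda$-filtration of a Baer module $M$, if quotients $M_\beta/M_\alpha$ failed to be Baer at unboundedly many indices $\alpha$, the witnessing obstructions could be assembled into a single nonzero element of $\Ext RM{\bigoplus_\alpha T_\alpha}$ with $\bigoplus_\alpha T_\alpha\in\mathcal T$, a contradiction; this is \cite[Theorem 4.3.2]{GT}, and it is what replaces both your test-set reduction and (in the original \cite[Theorem 10.1.5]{GT}) the Weak Diamond. Moreover, the injective-dimension branch is not a one-step enlargement but a downward induction on $i$ through the classes ${}^\perp(\mathcal T\cup\mathcal I_i)$, with base case $i=n$ settled by the fact that $({}^\perp\mathcal I_n,\mathcal I_n)$ is generated by a set of ${<}\aleph_1$-presented modules --- this is where $\aleph_0$-noetherianness enters, via Baer's criterion, rather than in keeping your enlargements countable. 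So the countable enlargement property you want is true, but its proof requires the direct-sum argument you have omitted; as written, your reduction to a small test set is the unproved heart of the theorem rather than a preliminary step.
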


\begin{proof}
1.\ implies 2. The case when ${}^\perp \mathcal T$ consists of modules of finite (and hence
bounded) projective dimension is a particular instance of \cite[Theorem 4.3.10]{GT} (for $\nu =
\aleph_0$ and $n$ = the common bound of the projective dimensions of all modules in ${}^\perp
\mathcal T$).

If $\mathcal T$ consists of modules of finite injective dimension, then (as $\mathcal T$ is closed
under arbitrary direct sums) there is $n < \omega$ such that $\mathcal T \subseteq \mathcal I _n$
where $\mathcal I _n$ denotes the class of all modules of injective dimension $\leq n$. We will
proceed by a modification of the proof of \cite[Theorem 10.1.5]{GT} (for $\mu = \aleph_0$). Since
the latter proof has an extra set--theoretic assumption of the Weak Diamond, we will give more
details below indicating how to avoid this assumption by employing the fact that $\mathcal T$ is
closed under arbitrary direct sums.

For each $i \leq n$, let $\mathcal A _i = {}^\perp (\mathcal T \cup \mathcal I_i )$. Then
$\mathcal A _i = {}^\perp \mathcal T \cap {}^\perp \mathcal I_i = {}^{\perp_1} \mathcal T \cap
{}^{\perp_1} \mathcal I_i$. Denote by $\mathcal Q _i$ a representative set of all countably
generated modules in $\mathcal A _i$. By downward induction on $i$, we will prove that each module
in $\mathcal A _i$ is $\mathcal Q _i$--filtered (our claim 2.\ is then the case of $i = 0$, since
${}^\perp \mathcal I_0 = \rmod R$.)

For $i = n$ we have $\mathcal A _i = {}^\perp \mathcal I_n$. From Baer's
criterion of injectivity and the fact that all syzygies of cyclic modules can be taken countably
generated (since $R$ is $\aleph_0$--noetherian) we deduce that the cotorsion pair $({}^\perp \mathcal I_n, \mathcal I_n)$ is generated by a class of
$<\aleph_1$ - presented modules.
So the claim   follows   from \cite[Theorem 4.2.11]{GT}.

The induction step from $i+1$ to $i$ for $i < n$ is proved by transfinite induction on the minimal
number $\lambda$ of $R$--generators of $M \in \mathcal A ^{i}$ as in \cite[Theorem 10.1.5]{GT}.
The only place where the Weak Diamond is used there is when $\lambda$ is regular and uncountable,
for selecting a subfiltration with consecutive factors in ${}^\perp \mathcal T$ from a
$\lambda$--filtration of $M$ consisting of modules in ${}^\perp \mathcal T$. However, this is
possible by \cite[Theorem 4.3.2]{GT} since $\mathcal T$ is closed under arbitrary direct sums.

The implication 2.\ implies 1.\ is a particular case of the well--known Eklof Lemma (see e.g.\
\cite[Lemma 3.1.2]{GT}).
\end{proof}

\bigskip

 {\em From now on,  unless stated otherwise, we will assume that $R$ is an indecomposable tame hereditary artin algebra with
 standard duality $D\colon {\rm mod-}R\to R{\rm -mod}$ between right
 and left finitely generated $R$-modules, respectively}.

\medskip

We recall the notation of \cite{RR} and \cite{R}: $\tube$ and $\p$ will denote representative
sets of
all indecomposable finitely generated regular, and preprojective, modules, respectively. Further,
$$\mathcal F = \tube^o = {}^\perp \tube$$ is the torsion-free class of all
{\em torsion-free modules}.  Note that
 $$\mathcal F \cap \rfmod R = \add\p,\quad\text{hence}\quad  \mathcal F = \varinjlim \add \p.$$
 Since $\mathrm{add}\,\tube$ is closed
under extensions, the corresponding torsion class, called the
class of all {\em torsion modules},  is $\Gen \tube$, see
\cite[3.5]{RR}.

A module $M$ is a {\em Baer module} provided that $M$ is a Baer module for $\Gen \tube$. The class
of all Baer modules is denoted by $\mathcal B$. So  $$\label{classes}\mathcal B = {}^\perp \Gen \tube \subseteq
\mathcal F = {}^\perp \tube, \quad\text{and}\quad \mathcal B \cap \rfmod R = \mathcal F \cap \rfmod R = \add \p$$

\begin{rem}\label{remtilti} Baer modules over tame hereditary algebras are torsion-free, and the same holds for the classical
Baer modules over commutative integral domains. However, this
fails in general: Let $R$ be a ring and $T$ be a tilting module.
Take $\mathcal T = T^\perp$. This is the torsion class consisting
of all modules generated by $T$, and  the corresponding
torsion-free class is $T^o$. Thus $T$ is a Baer module for
$\mathcal T$ which is not torsion-free.
\end{rem}

The modules having no non-zero homomorphism to $\p$ are called {\em $\mathcal P ^\infty$-torsion},
and their class is denoted by $\mathcal L$. By the infinitary version of the Auslander-Reiten
formula \cite{C2} $$\mathcal L = {}^o\p = \p^\perp$$ so $\mathcal L$ is a tilting torsion class.
There is a countably infinitely generated tilting module generating $\mathcal L$, called the {\em
Lukas tilting module}, and denoted by $L$, cf.\ \cite{KT}.

For a module $M$, we will denote by
$l(M)$ the $\mathcal P ^\infty$-torsion submodule of $M$, that is, $l(M)$ is the trace of $L$ in $M$.
The torsion--free class corresponding to $\mathcal L$ will be denoted by $\mathcal P$. This is the
class of all (possibly infinitely generated) {\em preprojective modules}
(Warning: in \cite{L1} and
\cite{L2}, preprojective modules are called `$\mathcal P ^\infty$-torsion-free'). We have
$$\mathcal P = L^o= (^o \p)^o \subseteq \tube^o = \mathcal F, \quad\text{and}\quad \mathcal P \cap \rfmod R  = \add \p$$
Moreover,  if $M$ is
a module such that $M \notin \mathcal L$, then $M$ has a non-zero finitely generated preprojective
direct summand (see \cite[Corollary 2.2]{R}). In particular, the preprojective indecomposable modules are precisely the copies of
modules from $\p$. Similarly, the finitely $\p$--filtered modules are exactly the modules in $\add\p$.

\medskip

In our particular setting, thanks to a result from \cite{O3}, we can improve the reduction Theorem \ref{reduction} and obtain the
following characterization of Baer modules:

\begin{thm}\label{characterization} Let $R$ be a tame hereditary artin algebra and $M$ be a module.
Then $M$ is a Baer module if and only if $M$ is $\p$--filtered.
\end{thm}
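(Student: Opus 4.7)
The plan is to prove the two implications separately; the forward direction (Baer implies $\p$-filtered) is the substantial one.

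The easy direction, ``$\p$-filtered implies Baer'', is essentially automatic. The displayed equality $\mathcal B \cap \rfmod R = \add \p$ recorded above gives $\p \subseteq \mathcal B$, and since $\mathcal B = {}^\perp \Gen \tube$ is a left Ext-orthogonal class, Eklof's Lemma \cite[Lemma 3.1.2]{GT} ensures that $\mathcal B$ is closed under transfinite filtrations. Hence every $\p$-filtered module is Baer.

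For the converse, my plan is a two-step reduction culminating in the result from \cite{O3}. Given a Baer module $M$, I would first apply Theorem \ref{reduction} with $\mathcal T = \Gen \tube$. The hypotheses hold: $R$ is artinian and hence $\aleph_0$-noetherian, $\Gen \tube$ is a torsion class, and the fact that $R$ is hereditary forces $\Exti i R - - = 0$ for all $i \geq 2$, which simultaneously yields ${}^{\perp_1}\mathcal T = {}^\perp\mathcal T$ and bounds the injective dimension of every $T \in \mathcal T$ by $1$. Theorem \ref{reduction} therefore delivers a filtration $(M_\alpha \mid \alpha \leq \kappa)$ of $M$ whose consecutive factors $M_{\alpha+1}/M_\alpha$ are countably generated Baer modules.

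The remaining task is to refine each countably generated Baer factor $N$ into a $\p$-filtration; the transitivity of filtrations then assembles these refinements into a $\p$-filtration of $M$. This is exactly where I would invoke the theorem of Okoh from \cite{O3}, which provides an ascending chain $N = \bigcup_{n<\omega} N_n$ with each $N_n$ and each quotient $N_{n+1}/N_n$ lying in $\add \p$. Since every module in $\add \p$ is a finite direct sum of indecomposables from $\p$, each such quotient can be split further into finitely many single-step factors in $\p$, producing a $\p$-filtration of $N$ of order type $\omega$. The main obstacle is precisely this countable step: although the inclusion $N \in \mathcal F = \varinjlim \add \p$ automatically exhibits $N$ as a directed union of preprojective submodules, it is the extra Ext-vanishing $\Ext R N t = 0$ for all $t \in \tube$ that forces finitely generated subobjects of $N$ to actually be $\p$-filtered rather than merely arising as direct limits of preprojective modules, and extracting such a well-behaved $\add\p$-by-$\add\p$ chain is the content of Okoh's argument.
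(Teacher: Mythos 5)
Your proposal is correct and follows essentially the same route as the paper: Eklof's Lemma for the easy direction, Theorem \ref{reduction} to reduce to countably generated Baer modules, and Okoh's theorem from \cite{O3} to produce a chain with finitely generated torsion-free (hence $\add\p$) factors, refined to a $\p$-filtration. (The only quibble is in your closing commentary: $\Ext RNt=0$ for $t\in\tube$ is just torsion-freeness and already puts finitely generated submodules in $\add\p$; the full Baer condition is what Okoh uses to arrange torsion-free \emph{quotients} $N_{n+1}/N_n$ --- but this does not affect the validity of the argument.)
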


\begin{proof} Assume $M \in \mathcal B$. By Theorem \ref{reduction}, $M$ has a filtration $\mathcal M =
( M_\alpha \mid \alpha \leq \kappa )$ such that $M_{\alpha + 1}/M_\alpha$ is a countably generated
Baer module for each $\alpha < \kappa$. So w.l.o.g., we can assume that $M$ is countably
generated. Then, by \cite[Theorem 1.2]{O3}, $M = \bigcup_{n < \omega} M_n$ where $M_n$ is finitely
generated, and $M_{n+1}/M_n \in \mathcal F$ for each $n < \omega$. Since $\mathcal B \cap \rfmod R
= \mathcal F \cap \rfmod R = \add\p$, we infer that $M$ has a filtration with all consecutive
factors finitely generated and preprojective, and hence $M$ is $\p$--filtered.

For the if--part, notice that since $\p \subseteq \mathcal B$, also
$\add\p \subseteq \mathcal B$, that is, all finitely generated
preprojective modules are Baer. So the claim follows   Eklof's Lemma
\cite[Lemma 3.1.2]{GT}.
\end{proof}

\begin{rem} Theorem \ref{characterization} is analogous to the corresponding result for integral domains.
In that case, however, one has to replace ``preprojective" by
``projective", so the existence of the filtration $\mathcal M$
already yields a direct sum decomposition, and hence projectivity,
of $M$, cf.\ \cite{ABH} and \cite{EFS}.
\end{rem}

Being in a hereditary ring, a submodule of a Baer module is again
Baer. However, the existence of a filtration of $M$ as in Theorem
\ref{characterization} implies existence of a large family of Baer
submodules of $M$ such that also all consecutive factors in the
family are Baer:

\begin{cor}\label{hillbaer} Let $R$ be a tame hereditary algebra. Let $M$ be a Baer module with a
$\p$--filtration $\mathcal M$ as in Theorem
\ref{characterization}. Then there exists a family $\mathcal Y$ of
(Baer) submodules of $M$ such that

\begin{enumerate}
\item[(H1)] $\mathcal M \subseteq \mathcal Y$ (in particular, $0, M \in \mathcal Y$).
\item[(H2)] $\mathcal Y$ is closed under arbitrary sums and intersections.
\item[(H3)] If $N,P \in \mathcal Y$ and  $N \subseteq P$, then $P/N$ is a Baer module.
\item[(H4)] Let $N \in \mathcal Y$ and $X$ be a finite subset of $M$. Then there is $P \in \mathcal Y$
such that $N \cup X \subseteq P$ and $P/N \in \add\p$.
\end{enumerate}

\end{cor}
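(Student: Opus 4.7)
The plan is to apply the Hill Lemma \cite[Theorem 4.2.6]{GT} to the given $\p$-filtration $\mathcal M$ of $M$. Since $R$ is an artin algebra, each factor of $\mathcal M$ lies in $\p$ and is therefore finitely presented; this puts us squarely in the setting of the Hill Lemma with the class of finitely presented modules $\p$.

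The Hill Lemma then produces a family $\mathcal Y$ of submodules of $M$ which satisfies (H1) and (H2) automatically, together with two further properties: (a) for every pair $N \subseteq P$ in $\mathcal Y$ the factor $P/N$ is $\p$-filtered, and (b) for every $N \in \mathcal Y$ and every finite subset $X\subseteq M$ there exists $P \in \mathcal Y$ with $N \cup X \subseteq P$ such that $P/N$ admits a finite $\p$-filtration. Property (a) yields (H3) at once via Theorem \ref{characterization}: any $\p$-filtered module is Baer, and the same theorem certifies that each member of $\mathcal Y$ is itself a Baer module.

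To deduce (H4) from (b), I would invoke the observation recorded just before Theorem \ref{characterization}: the finitely $\p$-filtered modules coincide with $\add\p$, since $\add\p$ is closed under extensions and a finite $\p$-filtration is a finite iterated extension of modules in $\p$. Thus the module $P/N$ produced by (b) belongs to $\add\p$, which is precisely what (H4) requires.

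The main delicate point is extracting the correct form of the Hill Lemma. The version in \cite[Theorem 4.2.6]{GT} is usually stated for a regular uncountable cardinal $\kappa$ and only guarantees, for $|X|<\kappa$, that $P/N$ is $<\kappa$-presented and $\p$-filtered; with $\kappa=\aleph_1$ this gives a countably $\p$-filtered $P/N$, which is weaker than $P/N\in\add\p$. One therefore has to use the finitary refinement built into Hill's construction: the members of $\mathcal Y$ are parametrised by certain ``closed'' subsets of the index set of $\mathcal M$, and any such subset representing $Y \in \mathcal Y$ can be enlarged by only finitely many indices to accommodate a given finite subset $X\subseteq M$, so that the resulting factor $P/N$ is a finite iterated extension of modules from $\p$. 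Once this finitary form is in hand, (H1)--(H4) follow formally from the Hill Lemma together with Theorem \ref{characterization} and the identification of $\add\p$ with the class of finitely $\p$-filtered modules.
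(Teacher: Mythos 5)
Your proposal is correct and is essentially the paper's own argument: the paper simply applies the Hill Lemma \cite[Theorem 4.2.6]{GT} to the filtration $\mathcal M$ with $\kappa=\aleph_0$, for which the finitely presented factors in $\p$ qualify and (H4) comes out with $P/N$ finitely $\p$-filtered, i.e.\ in $\add\p$. Your worry about the uncountable-cardinal version is moot here, since the cited form of the Hill Lemma is stated for arbitrary infinite regular cardinals and so already covers $\kappa=\aleph_0$; the rest of your deductions (via Theorem \ref{characterization} and the identification of finitely $\p$-filtered modules with $\add\p$) match the intended reading.
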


\begin{proof} This follows by an application of the Hill lemma \cite[Theorem 4.2.6]{GT} to the filtration
$\mathcal M$ (for $\kappa = \aleph_0$).
\end{proof}

\bigskip

\section{Universal localizations of Baer modules}

Though Theorem \ref{characterization} does not yield direct sum
decompositions of Baer modules, it can be used to prove that
universal localizations of Baer modules, in the sense of Schofield
\cite{S1}, always decompose in case we localize with respect to a
set of isomorphism classes of simple regular modules $\mathcal U$
containing at least one complete clique (that is, in case the
universal localization $R_{\mathcal U}$ is a hereditary noetherian
prime ring, see \cite[\S4]{C1}). In fact, in this case the localized
Baer modules are projective.

Recall that a {\em clique} is an equivalence class with respect to
the equivalence relation induced on the set of isomorphism classes
of simple regular modules by the relation $S\sim S'$ if $\Ext
RS{S'}\not=0$. In other words, two simple regular modules $S$ and
$S'$ belong to the same clique if and only if they are in the same
tube. All cliques are finite, and all but finitely many consist of
exactly one simple regular module.

Let $\mathcal U$ be a set of isomorphism classes of simple regular
modules. For every $S\in \mathcal U$ fix a presentation \[0\to
P_S\stackrel{f_S}{\to}Q_S\to S\to 0\] consisting of finitely
generated projective modules. The ring $R_{\mathcal U}$ can be
described as the universal localization of $R$ at the set of maps
between projective modules  $\Sigma =\{f_S\vert S\in \mathcal{U}\}$.

In \cite[Theorem~4.2]{C1}, Crawley-Boevey proved that
$R_\mathcal{U}$ is a hereditary noetherian prime ring provided that
$\mathcal U$ contains, at least, one complete clique. Otherwise
$\mathcal {U}$ is finite, and $R_\mathcal{U}$ is again a tame
hereditary artin algebra with the same center as $R$.

For  discussing the  behaviour of  Baer
modules under localization, we will need the following preliminary observations.

\begin{lem} \label{tor} Let $R$ be a tame hereditary algebra.
Let $\mathcal U$ be a set of isomorphism classes of simple regular
modules. Then:
\begin{itemize}
\item[(i)] \textrm{ \cite[2.2, \S 4]{C1}} The canonical map $R\to R_ {\mathcal U}$ is an inclusion,
so that there is an exact sequence
$$0 \to R \to R_{\mathcal U} \to N \to 0.$$
\item[(ii)] $\mathrm{Tor}_1^R(P,R_ {\mathcal U})=0= \mathrm{Tor}_1^R(P,N)$ for any finitely generated preprojective
$R$-module $P$.
\item[(iii)] $\mathrm{Tor}_1^R(M,R_ {\mathcal U})=0= \mathrm{Tor}_1^R(M,N)$ for any
Baer $R$-module $M$.
\end{itemize}
\end{lem}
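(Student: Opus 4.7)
The plan is to treat the three parts in order. Part (i) is quoted and requires no work. For (ii) and (iii) I tensor the short exact sequence $0\to R\to R_{\mathcal U}\to N\to 0$ of (i) with $P$, respectively with $M$, and exploit the long $\mathrm{Tor}$-sequence together with the hereditary hypothesis ($\mathrm{Tor}_2^R\equiv 0$) and the commutativity of $\mathrm{Tor}$ with direct limits.

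For part (ii), flatness of $R$ collapses the long sequence to the inclusion $\Tor{R}{P}{R_{\mathcal U}}\hookrightarrow\Tor{R}{P}{N}$, so it suffices to prove $\Tor{R}{P}{N}=0$. The essential input is a structural description of $N=R_{\mathcal U}/R$ as a left $R$-module: by the standard construction of the universal localization at the set of maps $\{f_S\mid S\in\mathcal U\}$, whose cokernels are simple regulars, $N$ carries a filtration (as a left $R$-module) whose successive quotients are simple regular modules (see \cite{C1,S1}). A transfinite induction, using hereditariness at successor stages and $\varinjlim$-continuity of $\mathrm{Tor}$ at limits, then reduces the claim to $\Tor{R}{P}{S}=0$ for a single simple regular $S$. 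For this I invoke the Tor--Ext duality $\Tor{R}{P}{S}\cong D\,\Ext{R}{P}{DS}$, where $DS$ is a finitely generated regular right $R$-module. The Auslander--Reiten formula gives $\Ext{R}{P}{DS}\cong D\,\overline{\Hom{R}{DS}{\tau P}}$, and the underlying Hom vanishes because in a tame hereditary algebra there are no nonzero morphisms from a regular indecomposable to a preprojective module.

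Part (iii) then follows by combining (ii) with Theorem~\ref{characterization}: every Baer module $M$ admits a $\p$-filtration $(M_\beta\mid\beta\le\kappa)$ with each $M_{\beta+1}/M_\beta\in\add\p$. Part (ii) yields $\Tor{R}{M_{\beta+1}/M_\beta}{R_{\mathcal U}}=0=\Tor{R}{M_{\beta+1}/M_\beta}{N}$ for every $\beta$, and a second transfinite induction (again $\mathrm{Tor}_2^R\equiv 0$ at successor steps and $\varinjlim$-continuity at limits) propagates this vanishing from the consecutive factors of the filtration to $M$ itself.

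The principal technical obstacle is pinning down, in the precise form required, the filtration of $N$ by simple regular modules. If this cannot be quoted directly, one can fall back on the perpendicularity characterization of right $R_{\mathcal U}$-modules as those $R$-modules $X$ with $\Hom{R}{S}{X}=0=\Ext{R}{S}{X}$ for every $S\in\mathcal U$, and build the required filtration of $N$ from a directed-union presentation of $R_{\mathcal U}$; everything else in the argument is then essentially formal.
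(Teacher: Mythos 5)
Your argument is correct and follows essentially the same route as the paper: both reduce (ii) to the vanishing of $\Tor RPN$ via the description of $N=R_{\mathcal U}/R$ as a directed union of left modules filtered by the simple regular left modules $\mathrm{Tr}\,U$ ($U\in\mathcal U$), quoted from Schofield, and both derive the key vanishing from the absence of nonzero maps from regular to preprojective modules --- the paper via Schofield's isomorphism $\Tor RP{\mathrm{Tr}\,U}\cong\Hom RUP$, you via the equivalent Tor--Ext/Auslander--Reiten duality. For (iii) the paper writes a Baer module as a direct limit of modules in $\add\p$ (Baer $\Rightarrow$ torsion-free $=\varinjlim\add\p$) rather than inducting along the $\p$-filtration of Theorem \ref{characterization}, but the two arguments are interchangeable.
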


\begin{proof}
To prove $(ii)$ we note that in the  short exact sequence of left
$R$--modules
$$0 \to R \to R_{\mathcal U} \to N \to 0$$
the module $N$ is a directed union of left $R$-modules that are finitely
$\mathcal V$--filtered where $\mathcal V = \{ \mbox{Tr}U \mid U \in
\mathcal U \}$ and  $\mbox{Tr}U$ denotes the transpose of $U$ (see
\cite[Theorem 3]{S2}).

Let $P$ be any finitely generated preprojective module. For each $U
\in \mathcal U$ we have by \cite[Lemma 4]{S2} that $\Tor
RP{\mbox{Tr}U} \cong \Hom RUP = 0$. This proves that $\Tor RPV = 0$
for each $V \in \mathcal V$, and hence $\Tor RPN = 0$. Since also
$\Tor RPR = 0$, we infer that $\Tor RP{R_{\mathcal U}} = 0$.

 $(iii)$
follows from $(ii)$, because  Baer modules are torsion-free, hence direct limits of finitely generated preprojective modules,  and $\mathrm{Tor}$ commutes with direct limits.
\end{proof}

\begin{thm}\label{universal} Let $R$ be a tame hereditary algebra.
Let $\mathcal U$ be a set of isomorphism classes of simple regular modules.
Denote by $R_{\mathcal U}$ the universal localization of $R$ with respect to $\mathcal
U$.
Let further $M$ be a Baer $R$-module, and  $M_{\mathcal U} = M \otimes_R R_{\mathcal U}$
its universal localization.

If $\mathcal U$ does not contain a complete clique, then $M_{\mathcal U}$ is a Baer $R_{\mathcal U}$--module.

If $\mathcal U$ contains at least
one complete clique, then $M_{\mathcal U}$ is a projective $R_{\mathcal U}$-module.
\end{thm}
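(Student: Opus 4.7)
The plan is to exploit the $\p$--filtration of $M$ guaranteed by Theorem \ref{characterization} and transfer it across the localization, then analyse its factors in each of the two cases. First I would invoke Theorem \ref{characterization} to fix a $\p$--filtration $\mathcal M = (M_\alpha \mid \alpha \leq \sigma)$ of $M$. Because $R$ is hereditary, every $M_\alpha$ is itself a submodule of a Baer module and is therefore Baer, so by Lemma \ref{tor}(iii) we have $\mathrm{Tor}^R_1(M_\alpha, R_{\mathcal U}) = 0$ for every $\alpha \leq \sigma$. Tensoring the filtration with $R_{\mathcal U}$ over $R$ thus preserves every short exact sequence $0 \to M_\alpha \to M_{\alpha+1} \to M_{\alpha+1}/M_\alpha \to 0$, and the preservation is compatible with unions at limit ordinals. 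This produces a filtration $\bigl(M_\alpha \otimes_R R_{\mathcal U} \mid \alpha \leq \sigma \bigr)$ of $M_{\mathcal U}$ whose consecutive factors are the universal localizations $(M_{\alpha+1}/M_\alpha) \otimes_R R_{\mathcal U}$ of finitely generated preprojective $R$--modules.

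In the first case, where $\mathcal U$ contains no complete clique, $R_{\mathcal U}$ is again an indecomposable tame hereditary artin algebra (by Crawley-Boevey's result cited before the theorem). Here I would argue that universal localization carries $\add\p$ into $\add\p_{R_{\mathcal U}}$: a finitely generated preprojective $P$ admits a projective resolution $0\to P_1 \to P_0 \to P \to 0$ over $R$, whose tensor product with $R_{\mathcal U}$ remains exact by Lemma \ref{tor}(ii), so $P\otimes_R R_{\mathcal U}$ has a two--term projective resolution over $R_{\mathcal U}$; together with Schofield's description of which $R_{\mathcal U}$--modules arise this way, one concludes that $P\otimes_R R_{\mathcal U}$ is a finitely generated preprojective $R_{\mathcal U}$--module. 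The transferred filtration is therefore a $\p_{R_{\mathcal U}}$--filtration of $M_{\mathcal U}$, and a second application of Theorem \ref{characterization} (now to the tame hereditary algebra $R_{\mathcal U}$) proves that $M_{\mathcal U}$ is Baer over $R_{\mathcal U}$.

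In the second case, where $\mathcal U$ contains a complete clique, $R_{\mathcal U}$ is a hereditary noetherian prime ring. The strategy now is to show that each factor $(M_{\alpha+1}/M_\alpha) \otimes_R R_{\mathcal U}$ is a finitely generated projective $R_{\mathcal U}$--module; once this is established, $M_{\mathcal U}$ is filtered by projective $R_{\mathcal U}$--modules, and Eklof's Lemma \cite[Lemma 3.1.2]{GT} yields $\Exti 1{R_{\mathcal U}}{M_{\mathcal U}}{X}=0$ for every $X\in \rmod R_{\mathcal U}$, so $M_{\mathcal U}$ is projective. To obtain projectivity of $P\otimes_R R_{\mathcal U}$ for $P\in \p$, one may again tensor the resolution $0\to P_1\to P_0 \to P \to 0$ with $R_{\mathcal U}$; the resulting module $P\otimes_R R_{\mathcal U}$ is finitely presented over the hereditary ring $R_{\mathcal U}$ and, since preprojectives are torsion--free and the Hom vanishing $\HomOp_R(U,P)=0$ for simples $U\in\mathcal U$ (used already in the proof of Lemma \ref{tor}) survives localization, $P\otimes_R R_{\mathcal U}$ carries no $R_{\mathcal U}$--torsion, so finitely generated and torsion--free over a hereditary noetherian prime ring forces projectivity.

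The main obstacle I anticipate is the last step of the previous paragraph: verifying that every finitely generated preprojective $R$--module becomes projective after localization at a set containing a complete clique. The Tor vanishing from Lemma \ref{tor} reduces this to showing that $P \otimes_R R_{\mathcal U}$ is torsion--free as an $R_{\mathcal U}$--module, which in turn rests on the fact that the simple regulars in the complete clique have been inverted, so no regular composition factors remain to produce torsion. Once that structural point is secured via the results of \cite{S1} and \cite{C1}, the rest of the argument is the transfer of filtrations together with Eklof's Lemma and Theorem \ref{characterization}.
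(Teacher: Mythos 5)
Your overall skeleton --- transfer the $\p$--filtration of Theorem \ref{characterization} across $-\otimes_R R_{\mathcal U}$ using the Tor--vanishing of Lemma \ref{tor}, then analyse the localized factors $P\otimes_R R_{\mathcal U}$ for $P\in\p$ in each of the two cases --- is exactly the paper's. In the first case, however, the paper reduces by induction on the rank of the Grothendieck group to $\mathcal U=\{S\}$ with $S$ in a tube of rank at least two and then quotes the identification $\p_{\mathcal U}=\p\cap\rmod{R_{\mathcal U}}=\{P\otimes_R R_{\mathcal U}\mid P\in\p\}$ from Geigle--Lenzing and Strauss; your appeal to ``a two--term projective resolution over $R_{\mathcal U}$ together with Schofield's description'' does not do this work, since over a hereditary ring every finitely presented module admits a two--term projective resolution, so that property says nothing about being preprojective. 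This part is repairable by citing the right statement, but as written it is an assertion rather than an argument.

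The genuine gap is in the second case. The fact you lean on --- finitely generated torsion--free modules over a hereditary noetherian prime ring are projective --- is true, but you never establish that $P\otimes_R R_{\mathcal U}$ is torsion--free over $R_{\mathcal U}$, and the justification you sketch aims at the wrong target: torsion of an $R_{\mathcal U}$--module is detected by the simple (finite length) $R_{\mathcal U}$--modules, which arise from the simple regular $R$--modules \emph{outside} $\mathcal U$ surviving in partially localized tubes, not from the inverted ones, so the vanishing $\Hom RUP=0$ for $U\in\mathcal U$ proves nothing here. You yourself flag this as the main obstacle, and it is precisely the point where the paper supplies the missing idea: choose a tube $\Theta$ whose mouth lies in $\mathcal U$, factor the inclusion $P\hookrightarrow E(P)$ through a finite direct sum $U$ of modules from $\Theta$, and pull back a finite projective resolution $0\to P_1\mapr{f}P_0\to U\to 0$ along this map to obtain $0\to P_1\mapr{f'}Q\to P\to 0$ with $Q$ projective (as $R$ is hereditary). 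Since all regular composition factors of $U$ lie in $\mathcal U$, one gets $U\otimes_R R_{\mathcal U}=0=\Tor R{U}{R_{\mathcal U}}$, so $f\otimes_R R_{\mathcal U}$ is bijective, $f'\otimes_R R_{\mathcal U}$ is a split monomorphism, and $P\otimes_R R_{\mathcal U}$ is a direct summand of the projective module $Q\otimes_R R_{\mathcal U}$; it is nonzero because $P$ embeds into it by Lemma \ref{tor}. Without either this construction or an actual proof of torsion--freeness over $R_{\mathcal U}$, your second case does not close.
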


\begin{proof}
By Theorem \ref{characterization}, $M$ has a a filtration $\mathcal
M = ( M_\alpha \mid \alpha \leq \kappa )$ such that $M_{\alpha +
1}/M_\alpha = P_\alpha$ is an indecomposable (finitely generated)
preprojective module for each $\alpha < \kappa$. Since, by
Lemma~\ref{tor}, $\Tor R{P_\alpha}{R_{\mathcal U}} = 0$ for each
$\alpha < \kappa$, the $R_{\mathcal U}$--module $M_{\mathcal U} = M
\otimes _R R_{\mathcal U}$ is a direct limit of the continuous
direct system of modules
$$\mathcal M _{\mathcal U} = (M_\alpha \otimes _R R_{\mathcal U}, f_{\beta \alpha} \mid \alpha \leq \beta \leq \kappa )$$
where $f_{\beta \alpha} = u_{\beta \alpha} \otimes _R R_{\mathcal U}$ and $u_{\beta
\alpha} : M_\alpha \to M_\beta$ is the inclusion. As $\mbox{Coker}(f_{\alpha +1, \alpha}) \cong
P_\alpha \otimes_R R_{\mathcal U}$, it remains to investigate the universal localization of the
indecomposable preprojective $R$--modules.

 We start with the case where $\mathcal U$ does not contain a complete clique. Then
 $R _{\mathcal U}$ is again a tame hereditary algebra, but with a smaller rank of the
Grothendieck group, see \cite[Theorem 4.2]{C1}. Proceeding by induction, we can assume that
 $\mathcal U=\{S\}$ for some simple regular module $S$ in a tube of rank at least two.
 Then, as shown in \cite[10.1]{GL}, \cite[6.9]{S},
   the preprojective component $\p_{\mathcal U}$ of $R _{\mathcal U}$  is
$$\p_{\mathcal U} = \p \,\cap\, {\rm Mod-}R_{\mathcal U}  = \{ P \otimes _R R_{\mathcal U}  \,\mid\,  P \in \p\}$$
So we conclude that $M_{\mathcal U}$ is filtered by finitely generated preprojective $R_{\mathcal U}$--modules, hence it is a
Baer $R_{\mathcal U}$--module.

 Now assume that $\mathcal U$ contains at least
one complete clique. We prove that in this case the universal
localization of any indecomposable   preprojective $R$--module
$P$ is   a nonzero projective $R_{\mathcal U}$--module. This will
show that $M_{\mathcal U}$ is a projective $R_{\mathcal U}$-module
 (see
\cite[Lemma 3.1.5]{GT}).

Let $P$ be an indecomposable   preprojective module. By assumption,
there is a tube $\Theta$ whose mouth is a subset of $\mathcal U$.
Then the embedding of $P$ into its injective envelope $E(P)$
factorizes through a finite direct sum $U=\bigoplus_{i \leq m} U_i$
of elements of $\Theta$. Making a pullback of a resolution of $U$
$$0 \to P_1\stackrel{f}{\to} P_0\to U \to 0$$
consisting of finitely generated projective modules with the
inclusion $\varepsilon \colon P\to U$ we obtain a commutative
diagram
$$
\begin{CD}
0                 @>>>                P_1                 @>>f'>                Q                      @>>>                        P                @>>>                0               \\
@.                                @|                                  @VV\varepsilon 'V                                               @VV\varepsilon V                                                        \\
0                 @>>>                P_1                 @>>f > P_0
@>>>              U             @>>> 0
\end{CD}
$$
where $Q$ is projective because $\varepsilon '$ is injective and the
ring is hereditary.

As, for each $i \leq m$, all simple regular composition factors of
$U_i$ are in $\mathcal U$, an inductive argument on the regular
composition length of $U$ shows that $U\otimes_R R_{\mathcal U}=0=
\mathrm{Tor}_R^1 (U,R_{\mathcal U})$. Hence $f\otimes _R R_{\mathcal
U}$ is bijective, and $\mathrm{Id}= (f\otimes _RR_{\mathcal
U})^{-1}(\epsilon' \otimes _RR_{\mathcal U}) (f'\otimes
_RR_{\mathcal U})$. We deduce that $P\otimes _RR_{\mathcal U}$ is a
projective $R_{\mathcal U}$-module. Moreover as, by Lemma~\ref{tor},
$P$ embeds in $P \otimes _R R_{\mathcal U}$ we deduce  that $P
\otimes _R R_{\mathcal U}$ is a non--zero projective $R_{\mathcal
U}$--module.
\end{proof}

Let us now  investigate the universal localization of the Lukas tilting $R$-module $L$.
If $\mathcal U$ contains at least one complete clique then the localization is a projective $R_{\mathcal U}$-module
by Theorem \ref{universal}. The interesting case is the one when $\mathcal U$ does not contain a complete clique, so
$R_{\mathcal U}$ is again a tame hereditary algebra:

\begin{thm} \label{lukaslocalized} Let $R$ be a tame hereditary algebra with the Lukas tilting $R$--module $L$.
Let $\mathcal U$ be a set of isomorphism classes of simple regular modules which does not contain a complete clique.
Denote by $R_{\mathcal U}$ the universal localization of $R$ with respect to $\mathcal U$.
Then the universal localization $L_{\mathcal U} = L \otimes_R R_{\mathcal U}$  of $L$ is equivalent to
the Lukas tilting $R_{\mathcal U}$--module $L^\prime$.
\end{thm}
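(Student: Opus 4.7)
The plan is to verify that $L_{\mathcal U}$ is itself a $1$-tilting $R_{\mathcal U}$-module sharing the tilting class of $L^\prime$; the tilting equivalence $L_{\mathcal U} \sim L^\prime$ will then follow automatically. The main technical device throughout will be a Tor-vanishing adjunction: since $L$ and each $P \in \p$ are Baer, Lemma~\ref{tor}(iii) gives $\Tor R L {R_{\mathcal U}} = 0 = \Tor R P {R_{\mathcal U}}$, so tensoring a finitely generated projective resolution over $R$ with $R_{\mathcal U}$ produces a projective resolution over $R_{\mathcal U}$. This yields
\[ \Ext {R_{\mathcal U}}{M \otimes_R R_{\mathcal U}}{X} \;=\; \Ext R M X \]
for every $X \in \rmod R_{\mathcal U}$ whenever $M \in \{L\} \cup \p$.

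The crucial step will be to show that $L_{\mathcal U}$ lies in $\mathcal L = {}^o\p$ when regarded as a right $R$-module. Fix $P \in \p$; tensoring the sequence $0 \to R \to R_{\mathcal U} \to N \to 0$ of Lemma~\ref{tor}(i) with $P$ and using Lemma~\ref{tor}(ii) produces an inclusion $P \hookrightarrow P_{\mathcal U} := P \otimes_R R_{\mathcal U}$, and by the identification $\p_{\mathcal U} = \p \cap \rmod R_{\mathcal U}$ used in the proof of Theorem~\ref{universal}, $P_{\mathcal U}$ is still preprojective over $R$. Since $\rmod R_{\mathcal U}$ embeds as a full subcategory of $\rmod R$, the tensor-hom adjunction gives
\[ \Hom R {L_{\mathcal U}} {P_{\mathcal U}} \;=\; \Hom {R_{\mathcal U}} {L_{\mathcal U}} {P_{\mathcal U}} \;=\; \Hom R L {P_{\mathcal U}} \;=\; 0, \]
the last vanishing coming from $L \in {}^o\p$. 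Applying $\Hom R{L_{\mathcal U}}{-}$ to $P \hookrightarrow P_{\mathcal U}$ will then force $\Hom R{L_{\mathcal U}}{P} = 0$, as needed.

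Once $L_{\mathcal U} \in \mathcal L$ is available, the tilting axioms are routine: (T1) is the hereditariness of $R_{\mathcal U}$; (T2) follows from $L_{\mathcal U}^{(I)} \in \mathcal L = L^{\perp_1}$ together with the adjunction; and (T3) is obtained by tensoring the defining sequence $0 \to R \to T_0 \to T_1 \to 0$ of $L$ with $R_{\mathcal U}$, where exactness is preserved because each $T_i \in \Add L$ is Baer, yielding $0 \to R_{\mathcal U} \to (T_0)_{\mathcal U} \to (T_1)_{\mathcal U} \to 0$ with $(T_i)_{\mathcal U} \in \Add L_{\mathcal U}$. To identify the two tilting classes in $\rmod R_{\mathcal U}$, the adjunction rewrites $L_{\mathcal U}^{\perp_1}$ as $\mathcal L \cap \rmod R_{\mathcal U}$, while applying it to each $P_{\mathcal U} \in \p_{\mathcal U}$ rewrites $(L^\prime)^{\perp_1} = \p_{\mathcal U}^\perp \cap \rmod R_{\mathcal U}$ as $\p^\perp \cap \rmod R_{\mathcal U} = \mathcal L \cap \rmod R_{\mathcal U}$. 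I expect the main obstacle to be the containment $L_{\mathcal U} \in \mathcal L$: it depends both on the full embedding $\rmod R_{\mathcal U} \hookrightarrow \rmod R$ specific to universal localization and on the stability $\p_{\mathcal U} \subseteq \p$ of the preprojective class; failure of either would collapse the chain of identifications.
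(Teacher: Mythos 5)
Your argument is correct, and it reaches the conclusion by a genuinely different mechanism than the paper at the two decisive points, even though both proofs rest on the same external inputs, namely Lemma \ref{tor} and the identification $\p_{\mathcal U}=\p\cap\rmod R_{\mathcal U}=\{P\otimes_RR_{\mathcal U}\mid P\in\p\}$ from Geigle--Lenzing/Strauss (which, as in the paper, requires first reducing by induction to $\mathcal U=\{S\}$ with $S$ in a tube of rank at least two; you should make that reduction explicit). For the key Hom-vanishing, the paper applies $\Hom R{-}{P^\prime}$ to the exact sequence $0\to L\to L_{\mathcal U}\to L\otimes_RN\to 0$ and kills the outer terms using Schofield's description of $N=R_{\mathcal U}/R$ as a directed union of finitely $\mathcal U$-filtered modules; you instead apply the left exact functor $\Hom R{L_{\mathcal U}}{-}$ to the embedding $P\hookrightarrow P_{\mathcal U}$ and dispose of $\Hom R{L_{\mathcal U}}{P_{\mathcal U}}$ by the ring-epimorphism and tensor--hom adjunctions, which yields the slightly stronger statement $L_{\mathcal U}\in{}^o\p=\mathcal L$ as an $R$-module and avoids the structure of $N$ altogether. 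For (T2) and the final equivalence, the paper quotes Theorems \ref{universal} and \ref{characterization} to see that $L_{\mathcal U}$ and $L^\prime$ are $\p_{\mathcal U}$-filtered and then applies Eklof's Lemma twice, concluding $L_{\mathcal U}\in\Add L^\prime$; you instead transfer Ext groups along $R\to R_{\mathcal U}$ --- legitimate because $\Tor RM{R_{\mathcal U}}=0$ for $M\in\{L\}\cup\p$, so tensoring a projective resolution again gives a projective resolution --- and match the tilting classes $L_{\mathcal U}^{\perp_1}=\mathcal L\cap\rmod R_{\mathcal U}=\p_{\mathcal U}^{\perp_1}=(L^\prime)^{\perp_1}$ directly inside $\rmod R_{\mathcal U}$. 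Your route is more self-contained, bypassing the Baer-module machinery of Theorems \ref{universal} and \ref{characterization} entirely, at the price of carrying the homological transfer along the localization explicitly; the paper's route is shorter given that those theorems are already in place.
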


\begin{proof}
As in the proof of Theorem~\ref{universal} we can assume that
$\mathcal U=\{S\}$ for a simple regular module $S$ in a tube of rank
at least two. Let $\p_{\mathcal U}$ denote a representative set of
all indecomposable preprojective $R_{\mathcal U}$-modules.

First, we claim that $\Ext {R_{\mathcal U}}{P^\prime}{L_{\mathcal U}} = 0$ for each $P^\prime \in \p_{\mathcal U}$.
By the infinitary version of the Auslander-Reiten formula, it suffices to prove that
$\Hom {R_{\mathcal U}}{L_{\mathcal U}}{P^\prime} = 0$ for each $P^\prime \in \p_{\mathcal U}$.
%By \cite[4.4]{GL},
Since the map $R\to R_{\mathcal U}$ is a ring epimorphism \cite[p. 56]{S1},  this is equivalent to $\Hom R{L_{\mathcal U}}{P^\prime} = 0$ for each $P^\prime \in \p_{\mathcal U}$.

By Lemma~\ref{tor},  the sequence of right $R$-modules $0 \to L \to
L_{\mathcal U} \to L \otimes_R N \to 0$ is exact.

We know from  \cite[10.1]{GL} that $\p_{\mathcal U} = \p\, \cap\,
{\rm Mod-}R_{\mathcal U}$. Thus $P^\prime$ is also preprojective as
right  $R$-module, so $\Hom RL{P^\prime} = 0$. Similarly, $\Hom
R{L\otimes_RN}{P^\prime} \cong \Hom RL{\Hom RN{P^\prime}} = 0$ since
$N \cong R_{\mathcal U}/R$ (as a right $R$-module) is a directed
union of finitely $\mathcal U$-filtered modules (that have no
non-zero homomorphisms to $P^\prime$) by \cite[Theorem 3]{S2}. Thus
$\Hom R{L_{\mathcal U}}{P^\prime} = 0$, and our claim is proved.

Now, we prove that $L_{\mathcal U}$ is a tilting module. Condition (T1) is clear since $R_{\mathcal U}$ is hereditary.

Condition (T2) states
that $\Ext {R_{\mathcal U}}{L_{\mathcal U}}{L_{\mathcal
U}^{(I)}} = 0$ for any set $I$.
Note that
$L_{\mathcal U}$ is a Baer
$R_\mathcal{U}$-module by Theorem~\ref{universal}, thus it is $\p_{\mathcal
U}$-filtered by Theorem~\ref{characterization}.
So, by \cite[3.1.2]{GT} it suffices to
show that $\Ext {R_{\mathcal U}}{P^\prime}{L_{\mathcal U}^{(I)}} =
0$ for each finitely generated preprojective $R_{\mathcal
U}$--module $P^\prime$.  Since $P^\prime$
is a finitely presented $R_{\mathcal U}$--module, the latter is equivalent
to $\Ext {R_{\mathcal U}}{{P^\prime}}{L_{\mathcal U}} = 0$, and (T2) follows from our claim above.

For condition (T3), consider an exact sequence $0 \to R \to L_0 \to
L_1 \to 0$ where $L_0$ and $L_1$ are direct summands in some
$L^{(J)}$. By Lemma~\ref{tor}, the sequence  $0 \to R_{\mathcal U}
\to (L_0)_{\mathcal U} \to (L_1)_{\mathcal U} \to 0$ is exact in
$\rmod {R_{\mathcal U}}$, and $(L_0)_{\mathcal U}$, $(L_1)_{\mathcal
U}$ are direct summands in $L_{\mathcal U}^{(J)}$.

This shows that $L_{\mathcal U}$ is a tilting module; it is also a
Baer $R_\mathcal U$-module by Theorem~\ref{universal}, thus
$L_{\mathcal U}\in{}^\perp({L^\prime}\,^\perp)$.  Moreover, by our
claim above, $\Ext {R_\mathcal U}{L^\prime}{L_{\mathcal U}} = 0$
because $L^\prime$ is $\p_{\mathcal U}$-filtered. So $L_{\mathcal U}
\in \Add (L^\prime)$, hence $L_{\mathcal U}$ is equivalent to
$L^\prime$.
\end{proof}

\medskip

Theorem \ref{lukaslocalized} will be used in a forthcoming paper for describing all infinite dimensional tilting modules over a tame hereditary artin algebra.

 \bigskip

\section{The structure of Baer modules}

Throughout this section, let $R$ be a tame hereditary artin algebra.
We will need further  properties of the (infinitely generated)
$R$-modules.

We denote by $G$ the generic module. By \cite[5.5]{R}, for each $M \in \mathcal F$ there are a unique cardinal $\kappa$, called the {\em
(torsion-free) rank} of $M$, and an exact sequence $0 \to M \to G^{(\kappa)} \to N \to 0$ where $N$
is torsion regular.

Following \cite{O1}, we call  $\mathcal E = G^\perp$ the class of all {\em cotorsion
modules}. In analogy with the Dedekind domain case, Okoh proved that all
torsion-free cotorsion modules are pure-injective, cf.\ \cite[p.265]{O1}. The first part of the
following Proposition will give this as a consequence of (infinite dimensional) cotilting theory.

Let us first recall some basic notions. A
   {\em cotorsion pair} is a pair of classes of modules $(\mathcal A,\mathcal B)$ such that
$\mathcal A = {}^\perp \mathcal B$ and $\mathcal B = \mathcal A
^\perp$. If $\Scal$ is a set of right $R$-modules, we obtain    a
cotorsion pair $(\mathcal A,\mathcal B)$ by setting $\mathcal B =
\Scal ^{\perp}$ and  $\mathcal A = {}^{\perp }(\Scal ^{\perp })$. It
is called the cotorsion pair {\em generated}\footnote{Our
terminology follows \cite{GT}, hence it differs from  previous use.}
by $\Scal$. Notice that $\mathcal A = {}^{\perp }(\Scal ^{\perp })$
consists of all direct summands of $\Scal\cup\{R\}$-filtered modules
\cite[Corollary~3.2.4]{GT}.

Dually, if $\Scal$ is a set of right $R$-modules, we obtain    a cotorsion pair $(\mathcal A,\mathcal B)$ by setting
$\mathcal A= {}^{\perp }\Scal $ and $\mathcal B = (^{\perp }\Scal) ^{\perp}$. It is called the cotorsion pair {\em cogenerated} by $\Scal$.
Cotorsion pairs  $(\mathcal A,\mathcal B)$ (co)generated by some (co)tilting module  $M$ are called {\em (co)tilting cotorsion pairs}.

We will denote by   $(\mathcal C,\mathcal D)$ the   cotorsion pair in $R$-Mod studied in \cite{RR}.
It is generated  by  a representative set $\tube^\prime$ of all
indecomposable regular left $R$-modules, that is,
 $\mathcal D=(\tube^\prime)^\perp$ is the class of all {\it divisible} left $R$-modules. Moreover,  $(\mathcal C,\mathcal D)$ is cogenerated  by  a representative set $\q^\prime$ of all
indecomposable preinjective left $R$-modules, that is,
 $\mathcal C={}^\perp(\q^\prime)$ is the class of all  left modules without non-zero preinjective summands. Finally,  $(\mathcal C,\mathcal D)$
 is a tilting and cotilting cotorsion pair (co)generated by the (co)tilting module $W$ which we call the {\em Ringel tilting left $R$-module}. Note that $W$ is the direct sum
of the generic left $R$-module $G^\prime$ with a representative set of the Pr\"ufer left $R$-modules.
Thus $D(W)$ is the direct product
of  $G$ with a representative set of the adic right $R$-modules.

\begin{prop} \label{pairs}

Let $R$ be a tame hereditary algebra.
\begin{enumerate}

\item $(\mathcal F, \mathcal E)$ is the cotorsion pair generated by $G$. This is also the cotilting cotorsion pair
cogenerated by $D(W)$. In particular, $\mathcal F \cap \mathcal E = \Prod D(W)$, so all
torsion-free cotorsion modules are pure-injective.

\item $(\mathcal B, \mathcal L)$ is the tilting cotorsion pair generated by the tilting module $L$. It is also
generated by the set $\p$. In particular, $\mathcal B \cap \mathcal L = \Add L$, and $L$ has no
non--zero finitely generated direct summands.

\end{enumerate}

\end{prop}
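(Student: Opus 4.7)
My first step is to verify that $(\Fcal,\mathcal E)$ is a cotorsion pair. For $M\in\Fcal$, Ringel's short exact sequence $0\to M\to G^{(\kappa)}\to N\to 0$ with $N$ torsion regular, upon applying $\Hom{R}{-}{C}$ for $C\in\mathcal E=G^\perp$ and using hereditariness, produces a surjection $\Ext{R}{G^{(\kappa)}}{C}\twoheadrightarrow\Ext{R}{M}{C}$; since $\Ext{R}{G^{(\kappa)}}{C}\cong\prod_\kappa\Ext{R}{G}{C}=0$, this forces $\Ext{R}{M}{C}=0$, so $\Fcal^\perp\supseteq\mathcal E$. The reverse inclusion is immediate from $G\in\Fcal$, and combined with $\Fcal={}^\perp\tube$ this shows $(\Fcal,\mathcal E)$ is a cotorsion pair, by construction the one generated by $G$.

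For the cotilting description, $W$ being tilting on the left implies $D(W)$ is cotilting on the right, and the adjunction $\Ext{R}{M}{D(W)}\cong D\Tor{R}{M}{W}$ gives ${}^\perp D(W)=\{M:\Tor{R}{M}{W}=0\}$. To identify this with $\Fcal$, I would use the decomposition $D(W)\cong G\times\prod(\text{adic right modules})$ recalled above and verify that each summand of $D(W)$ lies in $\Fcal\cap\mathcal E$: the adic right modules and $G$ are torsion-free pure-injectives, and in particular $\Ext{R}{G}{G}=0$ (standard for tame hereditary). This puts $D(W)\in\mathcal E=\Fcal^\perp$, yielding $\Fcal\subseteq{}^\perp D(W)$; the reverse inclusion relies on the classification of the indecomposable pure-injective torsion-free right $R$-modules as the summands of $D(W)$. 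Once ${}^\perp D(W)=\Fcal$ is established, the equality $\Fcal\cap\mathcal E=\Prod D(W)$ is the standard description of the intersection in a cotilting cotorsion pair, and pure-injectivity of torsion-free cotorsion modules follows since $D(W)$, being cotilting, is pure-injective.

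\textbf{Part (2).} Since $L$ is tilting with tilting class $L^\perp=\mathcal L$ and $\mathcal L=\p^\perp$ by the Auslander--Reiten formula, the cotorsion pairs generated by $L$ and by the set $\p$ share the right class $\mathcal L$ and hence the left class ${}^\perp\mathcal L$. By \cite[Corollary~3.2.4]{GT}, ${}^\perp\mathcal L$ is the class of direct summands of $\p\cup\{R\}$-filtered modules; since all indecomposable projectives over a hereditary artin algebra are preprojective, $R\in\add\p$, so this reduces to the class of direct summands of $\p$-filtered modules. By Theorem~\ref{characterization} this equals $\mathcal B$, itself closed under summands as $\mathcal B={}^\perp\Gen\tube$. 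The identity $\mathcal B\cap\mathcal L=\Add L$ is the standard characterization of the Ext-projective objects in the tilting class $\mathcal L$. For the last assertion, any non-zero finitely generated direct summand $X$ of $L$ would lie in $\Add L\subseteq\mathcal B\cap\mathcal L\subseteq\Fcal\cap\rfmod R=\add\p$, but $X\in\mathcal L={}^o\p$ would then force $\Hom{R}{X}{X}=0$, contradicting $X\neq 0$.

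The main obstacle is the identification ${}^\perp D(W)=\Fcal$ in Part~(1): the inclusion $\Fcal\subseteq{}^\perp D(W)$ reduces to verifying $D(W)\in\mathcal E$ (via the explicit summands of $D(W)$), while the reverse inclusion exploits the structural fact that every indecomposable pure-injective torsion-free right module over a tame hereditary algebra is a direct summand of $D(W)$.
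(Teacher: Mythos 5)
Your part (2), and your verification that $(\Fcal,\mathcal E)$ is the cotorsion pair generated by $G$, are correct and essentially coincide with the paper's argument (the paper obtains $\Fcal\subseteq{}^\perp(G^\perp)$ from the embedding $M\hookrightarrow G^{(\kappa)}$ and heredity rather than from a surjection of Ext groups, but this is the same idea). The genuine gap is in the cotilting half of part (1), namely the inclusion ${}^\perp D(W)\subseteq\Fcal$. First, the structural fact you invoke is misstated: the indecomposable pure-injective torsion-free modules are \emph{not} just the direct summands of $D(W)$ --- every indecomposable finitely generated preprojective module is torsion-free and pure-injective (being finitely generated over an artin algebra) yet is not a summand of $D(W)$. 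What is true is that the indecomposable modules in $\Fcal\cap\mathcal E$ are the summands of $D(W)$; but that is a statement about $\Fcal\cap\mathcal E$, and it does not by itself show that a module $M$ with $\Ext RM{D(W)}=0$ satisfies $\Ext RMT=0$ for every $T\in\tube$ (note that such $T$ is torsion, not torsion-free, so no classification of torsion-free pure-injectives applies to it). As written, this step does not close.

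The paper's route avoids the issue entirely: since $\tube=\{D(X)\mid X\in\tube^\prime\}$ and $\Ext RM{D(X)}\cong D\,\Tor RMX$, one has $\Fcal={}^\perp\tube=\Ker\Tor R{-}{\tube^\prime}$, and then \cite[2.3]{AHT} transports the tilting cotorsion pair generated by $\tube^\prime$ in the category of left $R$-modules (whose tilting module is $W$) to the cotilting cotorsion pair cogenerated by $D(W)$, giving ${}^\perp D(W)=\Ker\Tor R{-}{W}=\Ker\Tor R{-}{\tube^\prime}=\Fcal$ in one stroke. If you prefer to keep your hands-on approach, the missing inclusion can be proved directly: if $\Tor RMW=0$, then for each simple regular left module $S^\prime$ the embedding of $S^\prime$ into its Pr\"ufer module $S^\prime[\infty]$ (a direct summand of $W$), together with the vanishing of $\mathrm{Tor}_2$ over a hereditary ring, gives an embedding of $\Tor RM{S^\prime}$ into $\Tor RM{S^\prime[\infty]}=0$, and induction on regular length then yields $\Tor RMX=0$ for all $X\in\tube^\prime$, i.e. $M\in\Fcal$. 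The remainder of your part (1) (checking that the summands of $D(W)$ lie in $\mathcal E$, hence $\Fcal\subseteq{}^\perp D(W)$, and reading off $\Fcal\cap\mathcal E=\Prod D(W)$) is fine modulo the standard facts you quote.
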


\begin{proof} 1. We   show that $^\perp (G ^\perp) = \mathcal F$. Since $G \in \mathcal F$, we have
$^\perp (G ^\perp) \subseteq \mathcal F$. On the other hand, if $M \in \mathcal F$, then  $M
\hookrightarrow G^{(\kappa)}$ where $\kappa$ is the torsion-free rank of $M$. Since $G \in
{}^\perp (G ^\perp)$, also $M \in {}^\perp (G ^\perp)$. Moreover,  $\tube = \{D(X)\,\mid\, X\in\tube^\prime\}$, thus   $\mathcal F =
\Ker \Tor R{-}{\tube^\prime}$. So, we deduce from \cite[2.3]{AHT} that $\mathcal F$ is the cotilting torsion-free class cogenerated by the
cotilting module $D(W)$.
 This proves the first two claims. That
$\mathcal F \cap \mathcal E = \Prod D(W)$ follows from well-known
properties of cotilting cotorsion pairs (see e.g.\ \cite[\S
8.1]{GT}). As dual modules (and also cotilting modules) are pure
injective, we deduce that torsion-free cotorsion modules are
pure-injective.

2. By Theorem \ref{characterization} and \cite[3.2.4]{GT}
%First, we show that $^\perp (L ^\perp) = \mathcal B$. W
we have $\mathcal B={}^\perp (\p ^\perp)={}^\perp (L ^\perp)$,
%Since $\Hom R\tube\p = 0$, also $\Hom R{\Gen (\tube)}\p = 0$ and $\p \subseteq \mathcal B$ by the
%infinitary version of the Auslander-Reiten formula. On the other hand, by Theorem
%\ref{characterization}, $\mathcal B \subseteq {}^\perp (\p ^\perp)$. This proves that $^\perp (L^\perp) = \mathcal B$,
and the first two claims follow. That $\mathcal B \cap \mathcal L =
\Add (L)$ follows from well--known properties of tilting cotorsion
pairs (see e.g.\ \cite[\S 5.1]{GT}). Finally, if $F \in \Add L$ is
finitely generated, then $F \in \mathcal B \cap \rfmod R = \add (\p
) \subseteq \mathcal P$, so $F \in \mathcal P \cap \mathcal L
=\mathcal P \cap  {}^o\p = 0$.
\end{proof}

\begin{ex}\label{Gadic}
{\rm We observed on page \pageref{classes} that
$\mathcal B\subseteq \varinjlim \add \p=\mathcal F$, and the
finitely generated modules in both classes coincide (with the
finitely generated preprojective modules). However, the two classes
are different -- that is, $G \in \mathcal F \setminus \mathcal B$ --
because the class of all cotorsion modules is not closed under
arbitrary direct sums by \cite{O1} (or because there exist
preprojective modules which are not Baer, see below). In particular,
$\mathcal B$ is not closed under direct limits.

Note furthermore that also  the adic modules belong to $\mathcal F \setminus \mathcal B$.
In fact, let $S_{\lambda}, \, \lambda\in {\mathbb P}$, be a representative set of simple regular modules,
 and let $S_{\lambda}[-\infty], \, \lambda\in {\mathbb P}$, be the corresponding adic modules. If we assume that
  $S_{\lambda}[-\infty]$ belongs to  $^\perp(\p^\perp)$, then we obtain from
\cite[3.3(a)]{L2}
that every non-zero torsion-free module $Y\in \p^\perp $ satisfies Hom$(S_{\lambda}[-\infty],Y)\not=0$.
But this
contradicts the fact that Hom$(S_{\lambda}[-\infty],S_{\nu}[-\infty])=0$ whenever $S_{\lambda},S_{\nu}$ are not in the same clique.}
\end{ex}

\begin{rem} Notice the analogies of Proposition \ref{pairs} with the case when $R$ is an integral domain:

1. In general, there are three different cotorsion pairs of $R$-modules: $(\mathcal T \mathcal F,
\mathcal R \mathcal E)$ where $\mathcal T \mathcal F$ is the cotilting class of all torsion-free
modules, $(\mathcal F _0, \mathcal E \mathcal E)$ where $\mathcal F _0$ is the class of all flat
modules, and $(\mathcal S \mathcal F, \mathcal M \mathcal E)$, the cotorsion pair generated by the
quotient field $Q$ of $R$. Always  $\mathcal S \mathcal F \subseteq \mathcal F _0$ (the equality
holds iff all proper factors of $R$ are perfect rings, \cite{BS}) and $\mathcal F _0 \subseteq
\mathcal T \mathcal F$ (the equality holds iff $R$ is a Pr\" ufer domain, \cite{FS}). The
cotilting module $C$ generating $\mathcal T \mathcal F$ can be taken of the form $\delta^*$ where
$\delta$ is the Fuchs divisible (tilting) module (see e.g.\ \cite[Example 5.1.2]{GT}). The three
cotorsion pairs coincide iff $R$ is a Dedekind domain; in this case, $C$ can be taken of the form
$T^* = \Hom RT{Q/R}$ where $T = Q \oplus Q/R$ is the divisible tilting module analogous to the
Ringel tilting module. (In the tame hereditary case, the analogs of the first and third cotorsion
pairs coincide by Proposition \ref{pairs}.1, but the second is the trivial cotorsion pair
$(\mathcal P _0,\rmod R)$).

2. All Baer $R$-modules are projective by \cite{ABH}, so the analogue of the tilting cotorsion pair
$(\mathcal B, \mathcal L)$ from Lemma \ref{pairs}.(2) is the trivial tilting cotorsion pair
$(\mathcal P _0,\rmod R)$, and the analogue of the Lukas tilting module is $R$. This is reflected in the behaviour of
 $(\mathcal B, \mathcal L)$ under universal localization at tubes, see Theorem \ref{universal}.
\end{rem}

\medskip

We now employ Proposition \ref{pairs} to investigate the structure of the torsion-free $R$-modules.
Let us start with the Lukas tilting module.

{}From Example \ref{Gadic} and the classification of the
indecomposable pure-injective $R$-modules we   conclude that $L$ has
no pure-injective direct summand.  In particular, $L$ is not locally
pure-injective, see \cite[2.1(5)]{Z}. On the other hand, $L$
satisfies the following finiteness condition.

\begin{cor}\label{matrix}
$L$ is noetherian over its endomorphism ring.
\end{cor}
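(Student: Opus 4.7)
Let $S = \End(L_R)$. The aim is to show that $L$, viewed as a left $S$-module via evaluation, is noetherian.

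First, by Theorem~\ref{characterization} and the fact that $L$ is countably generated, I can pick a filtration $L = \bigcup_{n<\omega} L_n$ with $L_0 = 0$, each $L_n \in \add\p$ finitely generated, and $L_{n+1}/L_n$ indecomposable preprojective. Each $L_n$ has finite length over $R$ and therefore finite dimension over the centre $k$ of $R$; fix a $k$-basis $x_1^{(n)},\dots,x_{m_n}^{(n)}$ of $L_n$. Since $k \subseteq Z(S)$ (every $R$-endomorphism of $L$ is automatically $k$-linear), the sum $\sum_i Sx_i^{(n)}$ is a finitely generated left $S$-submodule of $L$ containing $L_n$. Consequently $L$ is a countable ascending union of finitely generated left $S$-submodules.

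For the ascending chain condition, consider a chain $N_1 \subseteq N_2 \subseteq \cdots$ of left $S$-submodules of $L$. For each $n$, the intersections $N_i \cap L_n$ form an ascending chain of $k$-subspaces of the finite-dimensional $k$-vector space $L_n$, and so stabilize in $i$ for each fixed $n$. The plan is to amalgamate these pointwise stabilizations using the Hill lemma (Corollary~\ref{hillbaer}), which provides a family $\mathcal Y$ of Baer submodules of $L$, closed under sums and intersections, such that every finite subset of $L$ sits inside a finitely generated preprojective member of $\mathcal Y$.

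The principal obstacle is that an arbitrary $\phi \in S$ need not preserve the filtration $\{L_n\}$, so $N \cap L_n$ a priori carries no natural structure as a module over $\End_R(L_n)$. Overcoming this should rely on the identity $\mathcal L = {}^o\p$ from the infinitary Auslander--Reiten formula: every homomorphism from $L$ to a preprojective module vanishes, which prevents $\phi \in S$ from moving $L_n$ in pathological ways. Concretely, although $\phi(L_n)$ need not lie inside $L_n$, both $L_n$ and $\phi(L_n)$ can be absorbed into a common finitely generated preprojective submodule $P \in \mathcal Y$, and the quotient $L/P$ remains Baer by property (H3). Combined with the noetherianness of the finite-dimensional $k$-algebra $\End_R(P)$, this Hill-compatibility should force any ascending chain of left $S$-submodules of $L$ to stabilize, completing the proof.
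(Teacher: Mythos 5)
There is a genuine gap, and it sits exactly where you write ``should force''. The pointwise stabilization you establish --- that for each fixed $n$ the chain $N_1\cap L_n\subseteq N_2\cap L_n\subseteq\cdots$ stabilizes inside the finite-length layer $L_n$ --- says nothing about the chain $(N_i)_i$ itself, because the stabilization index depends on $n$ and need not be bounded. This is the standard failure mode: an infinite-dimensional vector space is the union of finite-dimensional subspaces, any chain of subspaces stabilizes on each layer, and yet the space is far from noetherian over the base field. So the whole burden of the proof is to produce a \emph{uniform} bound, and nothing in your plan does this. The proposed remedy --- absorbing $L_n$ and $\varphi(L_n)$ into a common finitely generated $P\in\mathcal Y$ via the Hill lemma, using $\Hom R{L}{P'}=0$ for $P'\in\p$ and the noetherianness of the artinian algebra $\End_R(P)$ --- again only controls finitely many layers at a time and is never turned into an argument that a given ascending chain of $S$-submodules of all of $L$ terminates. (There are also minor slips: the centre $k$ of an artin algebra need not be a field, so ``$k$-basis'' should be ``finite generating set over $k$''; and the observation that $L$ is a countable union of finitely generated $S$-submodules is true of any countably generated module and carries no force.)

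The paper's proof is entirely different, and the real content lies elsewhere: since $L$ is torsion-free with no preprojective summands, the dual $D(L)$ is divisible with no preinjective summands, hence $D(L)\in\mathcal C\cap\mathcal D=\Add W$; the Ringel tilting module $W$ is $\Sigma$-pure-injective by \cite[10.1]{RR}, hence so is $D(L)$; and by \cite[9.9]{relml} a tilting module is noetherian over its endomorphism ring if and only if its dual is $\Sigma$-pure-injective. The finiteness you are trying to extract by hand from the $\p$-filtration is precisely the $\Sigma$-pure-injectivity of $W$ (equivalently, a chain condition on finite matrix subgroups), and some input of this strength is unavoidable; the filtration and the Hill family alone cannot supply it.
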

\begin{proof}
   Since $L$ is
torsion-free without preprojective summands, $D(L)$ is  divisible
without preinjective summands, thus $D(L)\in{\mathcal
C}\cap{\mathcal D}=\Add W$, cf. \cite[9.4]{relml}. This shows that
$D(L)$ is $\Sigma$-pure-injective since so is $W$ by
\cite[10.1]{RR}.
Now we apply \cite[9.9]{relml} which states that a tilting module is
noetherian over its endomorphism ring if and only if its dual module
is $\Sigma$-pure-injective.
\end{proof}

\begin{cor}\label{corol} Let $R$ be a tame hereditary algebra. A module $M$ is a Baer module if and
only if there is an exact sequence
$0 \to M \to L_1 \to L_2 \to 0$ where $L_1, L_2 \in \Add L$.
\end{cor}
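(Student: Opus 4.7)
My plan is to exploit the tilting cotorsion pair $(\mathcal B, \mathcal L)$ established in Proposition~\ref{pairs}(2), combined with the identification $\mathcal B \cap \mathcal L = \Add L$ and the hereditarity of $R$.

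For the ``only if'' direction, the cotorsion pair $(\mathcal B, \mathcal L)$ is generated by the set $\p$, and hence is complete by standard methods. Given $M \in \mathcal B$, I would take an associated special $\mathcal L$-preenvelope
\[
0 \to M \to L_1 \to L_2 \to 0
\]
with $L_1 \in \mathcal L$ and $L_2 \in \mathcal B$, and then show that both terms actually lie in $\Add L$. Since $M, L_2 \in \mathcal B$ and $\mathcal B = {}^\perp \mathcal L$ is closed under extensions, we get $L_1 \in \mathcal B$, whence $L_1 \in \mathcal B \cap \mathcal L = \Add L$. For $L_2$, recall from the discussion preceding Proposition~\ref{pairs} that $\mathcal L = {}^o \p$, and this class is evidently closed under epimorphic images (any homomorphism from a quotient of an $\mathcal L$-module to a preprojective lifts, and hence vanishes); so $L_2 \in \mathcal L$, and therefore $L_2 \in \Add L$ as well.

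For the ``if'' direction, suppose $0 \to M \to L_1 \to L_2 \to 0$ is given with $L_1, L_2 \in \Add L \subseteq \mathcal B$. For any $X \in \mathcal L$, the associated long exact sequence of Ext contains the fragment
\[
\Ext R{L_1}X \to \Ext RMX \to \Exti{2}{R}{L_2}{X}.
\]
The left term vanishes because $L_1 \in \mathcal B = {}^\perp \mathcal L$, and the right term vanishes by hereditarity of $R$. Therefore $\Ext RMX = 0$ for every $X \in \mathcal L$, i.e., $M \in {}^\perp \mathcal L = \mathcal B$.

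The only substantive ingredient is the existence of the special $\mathcal L$-preenvelope used in the forward direction; this is a routine consequence of the set-generation of $(\mathcal B, \mathcal L)$ via the standard small object argument (e.g.\ \cite[Theorem~3.2.1]{GT}), so I do not anticipate a real obstacle beyond assembling the cotorsion-pair bookkeeping cleanly.
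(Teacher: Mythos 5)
Your proof is correct and follows essentially the same route as the paper: the paper deduces the corollary from Proposition~\ref{pairs}(2) together with the general characterization of the left-hand class of a tilting cotorsion pair (\cite[Proposition 5.1.9]{GT}), and your argument is precisely the standard proof of that cited result specialized to $(\mathcal B,\mathcal L)$ — completeness of the set-generated cotorsion pair gives the special $\mathcal L$-preenvelope, extension-closure of $\mathcal B$ and quotient-closure of the torsion class $\mathcal L$ place $L_1,L_2$ in $\mathcal B\cap\mathcal L=\Add L$, and hereditarity handles the converse. All steps check out; you have simply unpacked the citation.
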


\begin{proof} This follows from Proposition \ref{pairs}.(2) and from the well--known characterization
of modules forming tilting cotorsion pairs (see e.g.\ \cite[Proposition 5.1.9]{GT}).
\end{proof}

\begin{cor}\label{corola}  The following statements are equivalent for a module $M$.
\begin{enumerate}
\item
 $M$ is torsion--free.
 \item
 $M$ is a pure-epimorphic image of direct sum of indecomposable preprojective modules.
 \item  $M$  occurs as the end term in a pure--exact sequence
$$0 \to N \to B \to M \to 0$$ with $B \in \mathcal B$
and  $N \in \Add L$.
\end{enumerate}
\end{cor}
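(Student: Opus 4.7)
The plan is to establish the equivalences by combining three ingredients: the description $\mathcal F = \varinjlim \add\p$, the closure of $\mathcal F$ under pure-epimorphic images, and the completeness of the tilting cotorsion pair $(\mathcal B, \mathcal L)$ from Proposition~\ref{pairs}.(2).

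First I would dispose of the easy implications. For (1)$\Rightarrow$(2), since $\mathcal F = \varinjlim \add\p$, any torsion-free $M$ is a direct limit $M=\varinjlim M_i$ with $M_i\in\add\p$; decomposing each $M_i$ into its indecomposable preprojective summands, the canonical presentation $\bigoplus_i M_i \to M$ of a direct limit of finitely presented modules is the required pure epimorphism onto $M$. The implications (2)$\Rightarrow$(1) and (3)$\Rightarrow$(1) both follow once one notes that $\mathcal F$ is closed under pure-epimorphic images: if $X\to Y$ is a pure epimorphism with $X\in\mathcal F=\tube^o$, then any map into $Y$ from a finitely presented $T\in\tube$ lifts to $X$ and hence vanishes, forcing $Y\in\mathcal F$. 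Applying this with $X=\bigoplus_i P_i$, $P_i\in\p$, respectively with $X=B\in\mathcal B\subseteq\mathcal F$, yields the two implications.

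The heart of the matter is (1)$\Rightarrow$(3). My plan here is to apply the completeness of $(\mathcal B,\mathcal L)$, which is generated by the set $\p$, in order to obtain for a given $M$ a special $\mathcal B$-precover sequence $0\to N\to B\to M\to 0$ with $B\in\mathcal B$ and $N\in\mathcal L$. Two facts then have to be verified. First, since $R$ is hereditary and $N$ is a submodule of the Baer module $B$, $N$ itself is Baer; hence $N\in\mathcal B\cap\mathcal L=\Add L$ by Proposition~\ref{pairs}.(2). Second, to show that the sequence is pure, I would prove that every map $F\to M$ from a finitely presented $F$ lifts to $F\to B$: using $M\in\mathcal F$, any such map vanishes on the torsion submodule $\tau(F)\subseteq F$ and therefore factors through the finitely presented torsion-free quotient $F/\tau(F)\in\mathcal F\cap\rfmod R=\add\p\subseteq\mathcal B$; since $N\in\mathcal L$ and $F/\tau(F)\in\mathcal B={}^\perp\mathcal L$, one has $\Ext R{F/\tau(F)}N=0$, so the factored map lifts through $B$ and provides the desired lift $F\to B$. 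This last purity argument is the step where the hypothesis $M\in\mathcal F$ is used in an essential way, and it is the only nontrivial point in the proof, since without torsion-freeness the factorization through $F/\tau(F)\in\mathcal B$ would not be available and the Ext-vanishing would fail.
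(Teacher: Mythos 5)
Your proof is correct, and its skeleton (the description $\mathcal F=\varinjlim\add\p$, closure of $\mathcal F$ under pure-epimorphic images, and the special $\mathcal B$-precover coming from completeness of the cotorsion pair $(\mathcal B,\mathcal L)$, with $N\in\mathcal B\cap\mathcal L=\Add L$ because $\mathcal B$ is closed under submodules over a hereditary ring) matches the paper's. The one place where you genuinely diverge is the purity of the precover sequence: the paper proves (2)$\Rightarrow$(3) and gets purity for free by observing that the pure epimorphism $\bigoplus_i P_i\to M$ from (2) factors through the special precover $B\to M$ (possible since $\bigoplus_i P_i\in\mathcal B$ and $N\in\mathcal L$), so $B\to M$ is itself a pure epimorphism. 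You instead prove (1)$\Rightarrow$(3) directly, testing purity against an arbitrary finitely presented $F$ and using the torsion pair $(\Gen\tube,\mathcal F)$ to factor $F\to M$ through $F/\tau(F)\in\mathcal F\cap\rfmod R=\add\p$, where the $\Ext$-vanishing against $N\in\mathcal L=\p^\perp$ gives the lift. Both arguments are sound; the paper's is shorter because it recycles implication (2), while yours is self-contained, makes explicit exactly where torsion-freeness of $M$ enters, and would survive even if one did not wish to route through statement (2).
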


\begin{proof} By Proposition \ref{pairs}.(1), each $M \in \mathcal F$ is a pure--epimorphic image of a direct sum
 of modules in $\add \p$, hence (1) implies (2).\\ For (2) $\Rightarrow (3)$, recall that by \cite[3.2.1]{GT} there is
 a special $\mathcal B$-precover of $M$, that is, an exact sequence
 $0 \to N \to B \to M \to 0$ where $B \in \mathcal B$
and  $N \in \mathcal L$.  Since the pure-epimorphism from (2) factors through it, the sequence is pure.
 Moreover, $N\in {\mathcal L} \cap {\mathcal B} =\Add L$ because $\mathcal B$ is closed under submodules.\\
The implication (3) $\Rightarrow (1)$
follows from the fact that $\mathcal B \subseteq \mathcal F$ and $\mathcal F = \varinjlim \add
\p$ is closed under pure-epimorphic images.
\end{proof}

\medskip

Next, we look at the relation between $\mathcal B$ and $\mathcal P$. Since $\mathcal P \cap \rfmod
R = \add\p$, we have $\mathcal B \cap \rfmod R = \mathcal P \cap \rfmod R$.
Moreover, all preprojective modules of countable rank are Baer \cite[4.3(4)]{L2}:

\begin{lem} \label{countable} Let  $M$ be a preprojective module
of countable rank. Then $M$ is a countably generated Baer module.
\end{lem}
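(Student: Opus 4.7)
My plan is to verify the Baer property via Theorem~\ref{characterization} by exhibiting a $\p$-filtration of $M$ of countable length; countable generation of $M$ will fall out of the construction. The first step is to write $M$ as a countable ascending union $M=\bigcup_{n<\omega}X_n$ of finitely generated preprojective submodules. This uses two ingredients: on the one hand, $M\in\mathcal P\subseteq\mathcal F=\varinjlim\add\p$ forces every finitely generated submodule of $M$ to lie in $\mathcal F\cap\rfmod R=\add\p$; on the other hand, the countable rank hypothesis, combined with the fact that every nonzero module in $\mathcal P$ splits off a finite-dimensional preprojective summand (an induction on rank using \cite[Corollary~2.2]{R}), allows one to cover $M$ by countably many such submodules.

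Next I would upgrade the chain $(X_n)$ to a genuine $\p$-filtration. The obstruction is that a finitely generated quotient of modules in $\add\p$ need not itself be preprojective, as it may acquire regular torsion. To fix this, at each step I would replace $X_n$ with a slightly larger finitely generated submodule $Y_n\supseteq X_n$ chosen so that $M/Y_n\in\mathcal P$. Granted such $Y_n$, the successive quotient $Y_{n+1}/Y_n$ embeds in the preprojective module $M/Y_n$ and is finitely generated, hence lies in $\mathcal P\cap\rfmod R=\add\p$. Refining each $Y_{n+1}/Y_n$ through a direct-sum decomposition into indecomposable summands from $\p$ then yields a $\p$-filtration of $M$, so $M\in\mathcal B$ by Theorem~\ref{characterization}.

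The main obstacle is therefore the construction of $Y_n$: equivalently, for any finitely generated submodule $X\subseteq M$ one needs to show that the $\mathcal P^\infty$-torsion submodule $l(M/X)$ is finitely generated, as then $Y_n$ can be defined as the preimage in $M$ of $l(M/X_n)$. Since $M\in\mathcal P$ contains no nonzero submodule lying in $\mathcal L$, the $\mathcal L$-part of the quotient $M/X$ is controlled by $X$ alone: its simple regular composition factors must lie in the finitely many tubes meeting $X$, and a tube-theoretic argument bounds its regular Loewy length, producing finite generation. This is essentially the content of the original argument for \cite[4.3(4)]{L2}, which I would adapt.
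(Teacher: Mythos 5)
Your first step already contains the one ingredient of the paper's argument that you do have right --- that a finite-rank submodule of a preprojective module is finitely generated, proved by repeatedly splitting off summands from $\add\p$ via \cite[Corollary 2.2]{R} --- but you never actually produce the countable ascending chain $M=\bigcup_{n<\omega}X_n$: countable rank is an abstract invariant and does not by itself yield a countable family of finite-rank submodules covering $M$. The device that makes this work, and which the paper uses, is the exact sequence $0\to M\to G^{(\omega)}$ defining the rank: setting $M_n=M\cap G^{(n)}$ gives finite-rank, hence finitely generated, submodules with $\bigcup_{n<\omega}M_n=M$. Crucially, this choice also makes your entire second step unnecessary, because $M_{n+1}/M_n$ embeds in $G^{(n+1)}/G^{(n)}\cong G$ and is therefore automatically torsion-free; being finitely generated, it lies in $\mathcal F\cap\rfmod R=\add\p$, and Theorem \ref{characterization} applies at once.

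The more serious gap is in your repair step. The construction of $Y_n$ hinges on the claim that $l(M/X)$ is finitely generated for every finitely generated $X\subseteq M$, and the justification you sketch (simple regular composition factors confined to the tubes meeting $X$, bounded regular Loewy length) controls at best the $\Gen\tube$-part of $M/X$. But $l(M/X)$ is the trace of $L$, i.e.\ the torsion part for the torsion class $\mathcal L={}^o\p$, and $\mathcal L$ contains much more than regular torsion: the generic module, the Pr\"ufer modules and the modules in $\Add L$ are infinitely generated torsion-free or divisible members of $\mathcal L$ about which ``regular Loewy length'' says nothing. (That $l(-)$ of a naturally occurring module can be a large nonsplit piece of $\Add L$ is exactly the phenomenon of Example \ref{Q}, so this is a genuinely delicate point, not a formality.) Excluding such pieces from $l(M/X)$ again comes down to the finite-rank argument run through the embedding into $G^{(\omega)}$ --- at which point you have reconstructed the paper's proof and the detour through $l(M/X)$ buys nothing. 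As written, the proposal leaves its central lemma unproved, and the sketch offered for it addresses the wrong torsion theory.
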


\begin{proof} First, let $F$ be a submodule of $M$ of finite rank. We prove that $F$ is finitely generated.
Indeed, if $F \neq 0$ then $F \notin \mathcal L$, so $F = P_1 \oplus F_1$ where $0 \neq P_1 \in
\add\p$. Similarly, if $F_1 \neq 0$ then $F_1 = P_2 \oplus F_2$ for $0 \neq P_2 \in \add\p$
etc. Since $F$ has finite rank, there is $n < \omega$ such that $F_n = 0$, so $F = \bigoplus_{i
\leq n} P_i \in \add\p$ is finitely generated.

So w.l.o.g., we can assume that $M$ has countably infinite rank. Consider an exact sequence $0 \to M \to G^{(\omega)}$.
Then $M _n = M \cap G^{(n)}$  ($n < \omega$) are submodules of $M$ of finite rank, hence finitely generated, with the property that
$M = \bigcup_{n < \omega} M_n$, and the factors $M_{n+1}/M_n \subseteq G $ belong to $ \mathcal F \cap \rfmod R = \add\p$.
So $M$ is  countably generated, and $M \in \mathcal B$ by Theorem \ref{characterization}.
\end{proof}

On the other hand, clearly, the countably generated tilting module $L$ is Baer, but not
preprojective.  In fact, $L$ is the only obstacle for the classes of countable rank Baer and
preprojective modules to coincide. This comes from the following version of a result by Lukas
(\cite[Theorem 4.3(c)]{L2}):

\begin{prop} \label{Lukas}  \, Let  $M$ be a module of countable rank.
Then $M$ is a Baer module if and only if $M = N \oplus P$ where $N \in \Add L$ and $P$ is
countably generated preprojective.
\end{prop}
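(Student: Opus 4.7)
The plan is to split $M$ along its $\mathcal{P}^\infty$--torsion submodule $l(M)$, showing the summand $l(M)$ lands in $\Add L$ and the complement is countably generated preprojective.

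For the nontrivial ($\Rightarrow$) direction, set $N := l(M)$, the trace of $L$ in $M$. By definition $N \in \mathcal{L}$. Since $M \in \mathcal{B}$ and $R$ is hereditary, the class $\mathcal{B} = {}^{\perp_1} \Gen \tube$ is closed under submodules, so $N \in \mathcal{B}$ as well. Proposition \ref{pairs}.(2) then gives $N \in \mathcal{B} \cap \mathcal{L} = \Add L$, which is exactly the summand we want. The quotient $M/N$ belongs to the torsion-free class $\mathcal{P}$ associated with the torsion class $\mathcal{L}$, so $M/N$ is a preprojective module.

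The next step is to verify that $M/N$ has countable rank, which is the main point requiring a little care: because $M$ is Baer it is torsion-free (Remark following Theorem \ref{characterization}), $N \in \Add L \subseteq \mathcal{F}$ is torsion-free, and $M/N \in \mathcal{P} \subseteq \mathcal{F}$ is torsion-free, so all three terms of $\exs{N}{}{M}{}{M/N}$ sit in $\mathcal{F}$ and rank is additive. In particular $\text{rank}(M/N) \leq \text{rank}(M) \leq \aleph_0$. Applying Lemma \ref{countable} to $M/N$ shows that $M/N$ is a countably generated Baer module. Now the decomposition falls out: since $M/N \in \mathcal{B}$ and $N \in \mathcal{L} = \mathcal{B}^{\perp_1}$, we have $\Ext R{M/N}{N} = 0$, so the sequence splits and $M = N \oplus P$ with $P \cong M/N$ as required.

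For the converse ($\Leftarrow$), given $M = N \oplus P$ with $N \in \Add L$ and $P$ countably generated preprojective, note that $N \in \Add L \subseteq \mathcal{B}$ by Proposition \ref{pairs}.(2); a countably generated preprojective module has countable rank (expressing it as a directed union of its finite rank, hence finitely generated, preprojective submodules, as in the first paragraph of the proof of Lemma \ref{countable}), so Lemma \ref{countable} yields $P \in \mathcal{B}$. Since $\mathcal{B}$ is the left Ext-orthogonal of a class and Ext commutes with direct sums in the first argument, $\mathcal{B}$ is closed under direct sums, whence $M \in \mathcal{B}$.

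The only genuine subtlety is ensuring that $M/N$ inherits countable rank from $M$; once the three relevant modules are seen to lie in $\mathcal{F}$, additivity of rank (via the generic module $G$) does the job, and the rest is a direct assembly of Proposition \ref{pairs}.(2) and Lemma \ref{countable}.
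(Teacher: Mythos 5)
Your proof is correct and follows essentially the same route as the paper: split off $N = l(M) \in \mathcal B \cap \mathcal L = \Add L$, show $M/l(M)$ is a countably generated preprojective Baer module via Lemma \ref{countable}, and use $\Ext R{M/l(M)}{l(M)} = 0$ from Proposition \ref{pairs}.(2) to split the sequence. The only divergence is the justification that $M/l(M)$ has countable rank: you assert additivity of rank on the short exact sequence of torsion-free modules, whereas the paper cites \cite[Lemma B]{O3} for precisely this point; your additivity claim is true but is itself a nontrivial property of Ringel's rank function and should be backed by a reference rather than asserted.
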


\begin{proof} The `if' part is clear from Lemma \ref{countable}. For the `only if' part,
let $M$ be a Baer module and consider the exact sequence $\varepsilon: 0 \to l(M) \to M \to M/l(M) \to 0$.
Then $l(M) \in \mathcal B \cap \mathcal L = \Add L$. Moreover,
the module $M/l(M) \in \mathcal P$ is of countable rank by \cite[Lemma B]{O3}, hence countably generated Baer by Lemma \ref{countable}.
Since $l(M) \in \mathcal L$,   Lemma \ref{pairs}.(2) shows that $\varepsilon$ splits.
\end{proof}

A weaker statement holds for arbitrary Baer modules.
In general, the $\mathcal P ^\infty$-torsion submodule $l(M)$ of a Baer module  $M$ is not a direct summand,
but we are going to see that  is always a c-pure submodule.
Actually, we prove a slightly stronger result:

\begin{thm}\label{c-separable}  \, Let  $M$ be a Baer module.
Then
 each countable subset $C$ of  $l(M)$   is contained in a countably generated submodule $D \subseteq l(M)$ which is a direct summand in $M$.
In particular, $l(M)$ is a c-pure submodule in $M$.
\end{thm}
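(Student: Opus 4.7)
The plan is to combine the Hill refinement from Corollary \ref{hillbaer} with the countable-rank Baer decomposition of Proposition \ref{Lukas}, applied to a countably generated Baer submodule of $M$ that captures $C$ via the trace of $L$. The main subtlety, addressed below, is that $\mathcal L$ is not closed under submodules, so $C$ must first be enlarged to a submodule that genuinely lies in $\mathcal L$.

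Since $l(M)$ is the trace of $L$ in $M$ and $C$ is countable, I would first fix a homomorphism $L^{(\omega)}\to M$ whose image $E_0$ contains $C$; then $E_0$ is countably generated, is contained in $l(M)$, and, as a quotient of $L^{(\omega)}\in\mathcal L$, lies in $\mathcal L$. Fix the Hill family $\mathcal Y$ of Corollary \ref{hillbaer}, enumerate a countable generating set of $E_0$, and iterate (H4) starting from $0\in\mathcal Y$ to build an ascending chain $N_0\subseteq N_1\subseteq\cdots$ in $\mathcal Y$ with each $N_{n+1}/N_n\in\add\p$ and $E_0\subseteq E:=\bigcup_n N_n$. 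By (H2) and (H3), $E\in\mathcal Y$ and $M/E$ is Baer; moreover $E$ is countably generated and, by its $\add\p$-filtration, of countable rank.

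Next, Proposition \ref{Lukas} decomposes $E=D\oplus P$ with $D\in\Add L$ and $P$ a countably generated preprojective module (Baer, by Lemma \ref{countable}). The composition $E_0\hookrightarrow E\twoheadrightarrow P$ is a morphism from a module in $\mathcal L$ to one in $\mathcal P$, so it vanishes since $(\mathcal L,\mathcal P)$ is a torsion pair. Hence $E_0\subseteq D$, giving $C\subseteq D\subseteq l(M)$; also $D$ is countably generated as a summand of $E$. This is where the choice of $E_0$ with $E_0\in\mathcal L$ is essential: it forces $E_0\subseteq l(E)=D$ rather than letting parts of $C$ leak into the preprojective complement $P$.

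Finally I would show that $D$ is a direct summand of $M$. The exact sequence $0\to P\to M/D\to M/E\to 0$ exhibits $M/D$ as an extension of two Baer modules, so $M/D\in\mathcal B$ by extension-closedness of $\mathcal B$ (the left component of the cotorsion pair $(\mathcal B,\mathcal L)$ from Proposition \ref{pairs}.(2)). Since $D\in\Add L\subseteq\mathcal L=\mathcal B^\perp$, it follows that $\Ext{R}{M/D}{D}=0$, so the inclusion $D\hookrightarrow M$ splits. Composing the resulting retraction with $D\hookrightarrow l(M)$ yields a retraction $M\to l(M)$ acting as the identity on $C$, which proves the c-purity clause.
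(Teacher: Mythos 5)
Your argument is correct and follows essentially the same route as the paper: trap $C$ inside a countably generated member $E$ of the Hill family from Corollary \ref{hillbaer}, split $E=l(E)\oplus P$ by Proposition \ref{Lukas}, and promote $l(E)$ to a direct summand of $M$ using that the relevant quotient of $M$ is Baer while $l(E)\in\Add L\subseteq\mathcal L=\mathcal B^\perp$. The only (immaterial) differences are that the paper identifies $C\subseteq l(Y)$ directly as a trace of $L$ rather than via the $\Hom R{\mathcal L}{\mathcal P}=0$ argument, and extends the retraction $Y\to l(Y)$ over $M$ instead of splitting $D\hookrightarrow M$ by showing $M/D$ is Baer.
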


\begin{proof}
By Proposition \ref{Lukas}, we can w.l.o.g.\ assume that $M$ is not
countably generated. Let $\mathcal M$ be an $\add\p$-filtration of
$M$ coming from Theorem \ref{characterization}. Let $\mathcal Y$ be
a family of Baer submodules of $M$ corresponding to $\mathcal M$ by
Corollary \ref{hillbaer}. Let $\mathcal Y _c$ denote the set of all
countably generated modules from $\mathcal Y$. By Proposition
\ref{Lukas}, for each $Y \in \mathcal Y _c$, the exact sequence
$$\exs {l(Y)}{\nu_Y}{Y}{}{Y/l(Y)}$$
splits where $\nu _Y$ is the inclusion of $l(Y)$ into $Y$.

We claim that $l(Y)$ is even a direct summand in $M$. Indeed, let $\rho _Y : Y \to l(Y)$ be such
that $\rho_Y \nu _Y = \mbox{id}_{l(Y)}$. By conditions (H1) and (H3), the module $M/Y$ is Baer, so
we have the exact sequence
$$
\begin{CD}
{\Hom RM{l(Y)}} @>{\Hom R{\mu_Y}{l(Y)}}>> {\Hom RY{l(Y)}} \to {\Ext R{M/Y}{l(Y)}} = 0
\end{CD}
$$
where $\mu _Y$ is the inclusion of $Y$ into $M$. So $\rho_Y$ extends to some $\pi_Y \in \Hom
RM{l(Y)}$, and $\pi_Y \mu _Y \nu _Y = \mbox{id}_{l(Y)}$ proving that $l(Y)$ is a direct summand in
$M$.

Now, if $C$ is a countable subset of $l(M)$ then there exist countably many maps $f_i \in \Hom RLM$ ($i < \omega$)
such that $C \subseteq S = \sum_{i < \omega} \mbox{Im}(f_i)$. Since $L$ is countably generated, by conditions
(H2) and (H4), there exists $Y \in \mathcal Y _c$ such that $S \subseteq Y$, and hence $C \subseteq S \subseteq l(Y)$.
By the first part of the proof, $D = l(Y)$ is a direct summand in $M$.
\end{proof}

\begin{rem} Notice the following peculiar property of Baer modules: each Baer module $M$ possess a $\p$--filtration $\mathcal M = ( M_\alpha \mid \alpha \leq \kappa )$ as in Theorem \ref{characterization}, and in fact $M$ has many other $\p$--filtrations by Corollary \ref{hillbaer}. In particular, $M$ is the well--ordered directed union of the $M_\alpha$'s ($\alpha < \kappa$). The exact sequences $\exs {l(M_\alpha)}{}{M_\alpha}{}{M_\alpha/l(M_\alpha)}$ ($\alpha \leq \kappa$) also form a well-ordered direct system. But unlike its middle term, $\mathcal M$, this system is not continuous in general, that is, a sequence indexed by a limit ordinal $\alpha$ need not be the direct limit of the sequences indexed by all $\beta < \alpha$. This is the price payed for
$M/l(M)$ being preprojective, since direct limits of preprojective modules need not be preprojective.

This phenomenon is best seen for the Lukas tilting module $L$: it is a directed union of a countable chain $( P_n \mid n < \omega )$ where all $P_n$'s are finitely generated preprojective, so $l(P_n) = 0$ for each $n < \omega$, but $l(L) = L$.
\end{rem}

Theorem \ref{c-separable} yields the following generalization of Proposition \ref{Lukas}:

\begin{cor} \label{general}  \,
Let $M$ be a Baer module. Then there exists a unique pure-exact sequence
$$\varepsilon: 0 \to N \hookrightarrow M \to P \to 0$$
such that $N= l(M) \in \Add L$ and $P$ is preprojective.
If $l(M)$ is countably generated, then $\varepsilon$ is split exact.
\end{cor}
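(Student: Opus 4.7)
The plan is to take the candidate sequence to be the canonical one coming from the $\mathcal P^\infty$-torsion theory, namely
$$\varepsilon\colon 0 \to l(M) \hookrightarrow M \to M/l(M) \to 0,$$
and then verify (a) $l(M)\in \Add L$, (b) $M/l(M)\in\mathcal P$, (c) $\varepsilon$ is pure, (d) this sequence is the unique one with these properties, (e) it splits when $l(M)$ is countably generated.

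For (a), since $R$ is hereditary and $\mathcal B={}^\perp\Gen\tube$ is closed under submodules, $l(M)$ is itself a Baer module; by construction $l(M)\in\mathcal L$, so Proposition \ref{pairs}.(2) gives $l(M)\in \mathcal B\cap\mathcal L=\Add L$. Part (b) is automatic because $(\mathcal L,\mathcal P)$ is a torsion pair and $l(M)$ coincides with the $\mathcal L$-torsion submodule of $M$, so $M/l(M)\in\mathcal P$ is preprojective. For (c), Theorem \ref{c-separable} states that every countable subset of $l(M)$ sits inside a countably generated direct summand of $M$; this is precisely the definition of $l(M)$ being c-pure in $M$, and c-pure implies pure, so $\varepsilon$ is pure-exact.

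For uniqueness (d), suppose $0\to N\to M\to P\to 0$ satisfies $N\in\Add L$ and $P$ preprojective. Since $N$ is a direct summand of some $L^{(I)}$, the inclusion $N\hookrightarrow M$ factors through a map $L^{(I)}\to M$, forcing $N\subseteq l(M)$. Conversely, $P\in\mathcal P=L^o$, so every homomorphism $L\to M$ lands in $N$, giving $l(M)\subseteq N$; thus $N=l(M)$ and $P=M/l(M)$. Finally for (e), if $l(M)$ is countably generated, apply Theorem \ref{c-separable} to the countable generating set $C$ of $l(M)$: one obtains a countably generated submodule $D\subseteq l(M)$ containing $C$, hence $D=l(M)$, which is a direct summand of $M$. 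Therefore $\varepsilon$ splits.

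The only delicate point is (c): one might hope to deduce purity abstractly from $N\in\Add L$ and $P\in\mathcal P$, but such an $\Ext$-vanishing argument is not available in general, and this is exactly where the c-separability result of Theorem \ref{c-separable} is indispensable.
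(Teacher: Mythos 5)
Your proposal is correct and follows essentially the same route as the paper: the sequence is the canonical one for the torsion pair $(\mathcal L,\mathcal P)$, with $l(M)\in\mathcal B\cap\mathcal L=\Add L$ from Proposition \ref{pairs}.(2), purity (and the splitting in the countably generated case) extracted from Theorem \ref{c-separable}, and uniqueness from the torsion pair. The paper merely states the uniqueness and the splitting more tersely; your spelled-out trace argument for $N=l(M)$ and your observation that purity cannot be obtained by an abstract $\Ext$-vanishing argument are both accurate.
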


\begin{proof}  By Theorem \ref{c-separable}, the sequence $\exs {l(M)}{}M{}{M/l(M)}$ is pure-exact, and
it is even split exact when $l(M)$ is countably generated.
Clearly $l(M) \in \mathcal B \cap \mathcal L = \Add L$. Uniqueness follows from the fact that
$(\mathcal L, \mathcal P)$ is a torsion pair in $\rmod R$.
\end{proof}

Corollary \ref{corola} shows that any preprojective module occurs as the
${\mathcal P}^\infty$-torsion-free part of some Baer module. Such presentation is non-trivial in the uncountable rank
case, since there are preprojective modules which are not Baer:

\begin{ex}\label{Q}
{\rm Let $Q$ be the direct product of a set of representatives of the
isoclasses from $\p$. Then $Q$ is preprojective (since $\mathcal P$ is closed
under arbitrary direct products), but not Baer (see \cite[Proposition 3.1]{O3}).
So, $\mathcal B$ is not closed under direct products.

Moreover, considering the exact sequence $0 \to N \to B \to Q \to 0$ corresponding to $Q$ by
Corollary \ref{corola}, we see that $B$ is an uncountably generated Baer module whose
$\mathcal P ^\infty$-torsion submodule is pure in $B$, but does not split. So Proposition
\ref{Lukas} cannot be extended to uncountably generated Baer modules. }
\end{ex}

Corollary \ref{general} suggests the following definition: two Baer modules $B_1$ and $B_2$ are
{\em equivalent} ($B_1 \sim B_2$ for short) provided that $B_1/l(B_1) \cong B_2/l(B_2)$.

\begin{lem}\label{equiv} Let $R$ be a tame hereditary artin algebra and $B_1 , B_2 \in \mathcal B$.
Then $B_1 \sim B_2$ if and only if there exist
modules $L_1, L_2 \in \Add L$ such that $B_1 \oplus L_1 \cong B_2 \oplus L_2$.
\end{lem}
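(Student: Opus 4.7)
The plan is to prove both directions by direct construction, using the fact established in Proposition \ref{pairs}.(2) that $(\mathcal B, \mathcal L)$ is a cotorsion pair and $\mathcal B \cap \mathcal L = \Add L$.

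For the (if) direction, I would apply the $\mathcal P^\infty$-torsion functor $l(-)$ to an isomorphism $\psi : B_1 \oplus L_1 \to B_2 \oplus L_2$. Since $l(-)$ is the trace of $L$ it commutes with direct sums, and since $L_i \in \Add L \subseteq \mathcal L$ we have $l(L_i) = L_i$. Thus $\psi$ restricts to an isomorphism $l(B_1) \oplus L_1 \cong l(B_2) \oplus L_2$, and passing to the quotient gives the isomorphism $B_1/l(B_1) \cong B_2/l(B_2)$, i.e.\ $B_1 \sim B_2$.

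For the (only if) direction, let $\phi: B_1/l(B_1) \to B_2/l(B_2)$ be an isomorphism, let $\pi_i: B_i \to B_i/l(B_i)$ be the canonical projections, and form the pullback
\[
\begin{CD}
X @>q_1>> B_1 \\
@Vq_2VV @VV\phi \pi_1V \\
B_2 @>\pi_2>> B_2/l(B_2)
\end{CD}
\]
A standard diagram chase produces the two short exact sequences
\[
0 \to l(B_2) \to X \xrightarrow{q_1} B_1 \to 0, \qquad 0 \to l(B_1) \to X \xrightarrow{q_2} B_2 \to 0.
\]
Because $R$ is hereditary, $l(B_i)$ is a submodule of the Baer module $B_i$ and is therefore itself Baer; it also lies in $\Gen L \subseteq \mathcal L$. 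Hence $l(B_i) \in \mathcal B \cap \mathcal L = \Add L$. The defining property of the cotorsion pair then gives $\Ext R{B_i}{l(B_j)} = 0$, so both of the displayed sequences split. Consequently
\[
B_1 \oplus l(B_2) \cong X \cong B_2 \oplus l(B_1),
\]
and taking $L_1 = l(B_2) \in \Add L$ and $L_2 = l(B_1) \in \Add L$ yields the desired isomorphism.

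I do not anticipate a serious obstacle: the entire argument hinges on Proposition \ref{pairs}.(2) and on the trace functor respecting direct sums. The only points requiring minor care are verifying that the pullback truly yields the two claimed short exact sequences (routine) and ensuring that $l(B_i) \in \Add L$, which is immediate from submodule-closure of $\mathcal B$ in the hereditary setting.
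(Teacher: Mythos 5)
Your proof is correct and follows essentially the same route as the paper: the ``only if'' direction via the pullback of $B_1\to P\leftarrow B_2$ (yielding the two short exact sequences with kernels $l(B_2)$ and $l(B_1)$ in $\Add L$, which split because $(\mathcal B,\mathcal L)$ is a cotorsion pair), and the ``if'' direction by factoring out the $\mathcal P^\infty$-torsion submodule of $B_1\oplus L_1\cong B_2\oplus L_2$. All the supporting facts you invoke (submodule-closure of $\mathcal B$ over a hereditary ring, $l(B_i)\in\Gen L=\mathcal L$, and $\mathcal B\cap\mathcal L=\Add L$) are exactly the ones the paper uses.
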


\begin{proof} Assume $B_1 \sim B_2$, so $P = B_1/l(B_1) \cong B_2/l(B_2) \in \mathcal P$. Considering the
pull-back of the epimorphisms $B_1 \to P$ and $B_2 \to P$, we obtain the following commutative
diagram

$$
\begin{CD}
                  @.                                  @.                  0                       @.                          0                                                       \\
@.                                @.                                  @VVV                                                @VVV                                                        \\
                  @.                                  @.                  L_1                 @=                L_1                                                           \\
@.                                @.                                  @VVV                                                @VVV                                                        \\
0                 @>>>                L_2                 @>>>                X                       @>>>                        B_2                 @>>>                0               \\
@.                                @|                                  @VVpV                                               @VVV                                                        \\
0                 @>>>                L_2                 @>>>                B_1                 @>>>              P             @>>>                0               \\
@.                                @.                                  @VVV                                                @VVV                                                        \\
                  @.                                  @.                  0                       @.                          0
\end{CD}
$$
where $L_2 = l(B_1)$ and $L_1 = l(B_2)$ belong to $\Add L$. By Proposition \ref{pairs}.(2),
the middle row and the middle column split, so
$X \cong B_1 \oplus L_1 \cong B_2 \oplus L_2$.

The reverse implication follows by factoring out the $\mathcal P ^\infty$-torsion submodule
of $B_1 \oplus L_1 \cong B_2 \oplus L_2$.
\end{proof}

\begin{cor}\label{corresp}  There is a bijective correspondence
between equivalence classes of Baer modules and isomorphism classes of preprojective modules.
\end{cor}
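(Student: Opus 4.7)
The plan is to construct the bijection explicitly via the assignment $\Phi \colon [B] \mapsto B/l(B)$, where $[B]$ denotes the equivalence class of a Baer module $B$ under the relation $\sim$, and then verify the four usual items: well-defined, takes values in preprojective modules, injective, surjective. The first three are essentially built into the setup, and the heart of the argument is surjectivity.

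First I would observe that $\Phi$ is well-defined and injective simply by unpacking the definition of $\sim$: by construction $B_1 \sim B_2$ \iff $B_1/l(B_1) \cong B_2/l(B_2)$, which says exactly that $\Phi([B_1]) = \Phi([B_2])$. Next, by Corollary \ref{general} we have for every Baer module $B$ a pure-exact sequence
$$0 \to l(B) \to B \to B/l(B) \to 0$$
with $B/l(B)$ preprojective, so the image of $\Phi$ lies in isomorphism classes of preprojective modules, as required.

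The main content is to prove surjectivity: given any preprojective module $P$, I must produce a Baer module $B$ with $B/l(B) \cong P$. I would apply Corollary \ref{corola} to $P$ (which is torsion--free, since $\mathcal{P}\subseteq\mathcal{F}$) to obtain a pure-exact sequence
$$0 \to N \to B \to P \to 0$$
with $B \in \mathcal{B}$ and $N \in \Add L \subseteq \mathcal{L}$. It then remains to identify $N$ with $l(B)$. Clearly $N \subseteq l(B)$ because $N \in \mathcal{L}$. Conversely, $l(B)/N$ is a quotient of $l(B) \in \mathcal{L}$, so $l(B)/N \in \mathcal{L}$; on the other hand $l(B)/N \hookrightarrow B/N \cong P \in \mathcal{P}$. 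Since $(\mathcal{L},\mathcal{P})$ is a torsion pair, we have $\Hom R{\mathcal{L}}{\mathcal{P}} = 0$, forcing $l(B)/N = 0$ and hence $N = l(B)$. Therefore $B/l(B) \cong P$, proving that $\Phi$ is surjective.

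I do not expect any real obstacle here; the only mildly delicate point is the identification $N = l(B)$ in the surjectivity step, which is immediate once the torsion pair $(\mathcal{L},\mathcal{P})$ is invoked. Everything else is a direct repackaging of Corollaries \ref{general} and \ref{corola} (and, if one wishes to make contact with Lemma \ref{equiv}, one may note that the bijection given by $\Phi$ is compatible with the Add $L$-stable-equivalence description of $\sim$ provided by that lemma).
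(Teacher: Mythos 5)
Your proof is correct and follows essentially the same route as the paper: the forward map $B\mapsto B/l(B)$, with surjectivity supplied by the special $\mathcal B$-precover of a preprojective module coming from Corollary \ref{corola}. The only difference is cosmetic: where the paper invokes Lemma \ref{equiv} to see that different precovers of $P$ are equivalent and then declares ``the claim is clear,'' you instead verify directly that $N=l(B)$ via the torsion pair $(\mathcal L,\mathcal P)$, which makes the inverse property explicit.
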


\begin{proof} The correspondence is defined on representatives as follows: $B \mapsto B/l(B)$ for a Baer module $B$;
given a preprojective module $P$, we first consider a special $\mathcal B$-precover $B \to P$, and assign $P \mapsto B$.
If $B^\prime \to P$ is another special $\mathcal B$-precover of $P$ then $B \sim B^\prime$ (see Lemma \ref{equiv}),
and the claim is clear.
\end{proof}

Since general preprojective modules are not classified, Corollary \ref{corresp} leaves little hope for a complete
classification of all Baer modules.

\bigskip

\section{The structure of Mittag--Leffler modules}

Recall that a   module  $M$ over   an arbitrary  ring is said to be {\em
separable} if every finite subset of $M$ is contained in a finitely
presented direct summand of $M$.

The torsion-free separable  modules over a tame hereditary algebra
were studied by Lenzing and Okoh in \cite{Okohsep} and  \cite{OL}.
We will see that   they are precisely the torsion-free (strict)
Mittag-Leffler $R$-modules. This will then enable us to give  a
complete classification of all Mittag-Leffler $R$-modules.

We start by recalling some notions. If $M$ is a right $R$-module,
and ${\mathcal Q}$ is a class of left $R$-modules, we say that $M$
is a \emph{${\mathcal Q}$-Mittag-Leffler module} if the canonical
map
\[\rho \colon  M\bigotimes_R \prod _{i\in I}Q_i\to \prod _{i\in
I}(M\bigotimes_RQ_i)\] is injective for any family $\{Q_i\}_{i\in
I}$ of modules in ${\mathcal Q}$. For ${\mathcal Q}=R$-Mod we obtain
the notion of a \emph{Mittag-Leffler module} from \cite{RG}.

The notion of relative Mittag-Leffler modules appears naturally in
connection with cotorsion pairs. For example, if $R$ is a tame
hereditary algebra and    $\mathcal C$ denotes the class of all left
$R$--modules without non--zero preinjective summands then, from a
general result on cotorsion pairs \cite[9.5]{relml}, we get the following result

\begin{prop} Let $R$ be a tame hereditary artin algebra.
Every Baer right $R$-module is $\mathcal C$-Mittag-Leffler.
Moreover, a countably generated torsion-free module is Baer if and
only if it is $\mathcal C$-Mittag-Leffler.
\end{prop}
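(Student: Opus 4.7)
The plan is to derive both assertions as direct applications of the general theorem \cite[Theorem 9.5]{relml} on relative Mittag-Leffler modules and cotorsion pairs, applied to the tilting cotorsion pair $(\mathcal{B}, \mathcal{L})$ in $\rmod R$. First I would note that, by Proposition~\ref{pairs}(2), this cotorsion pair is generated by the set $\p$ of indecomposable finitely generated preprojective modules; in particular, it is of finite type, so the cited theorem applies. That theorem produces a class of left $R$-modules
\[
\mathcal{Y} = \{ N \in R\text{-Mod} \mid \Tor{R}{P}{N} = 0 \text{ for all } P \in \p \},
\]
with the property that every $M \in \mathcal{B}$ is $\mathcal{Y}$-Mittag-Leffler, and, conversely, every countably presented module in the Tor-orthogonal class of $\mathcal{Y}$ belongs to $\mathcal{B}$ provided it is $\mathcal{Y}$-Mittag-Leffler.

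The key step is the identification $\mathcal{Y} = \mathcal{C}$. Since $R$ is an artin algebra, the standard duality $D$ sends finitely generated preprojective right modules to finitely generated preinjective left modules, so $D(\p) = \q^\prime$. Applying the Ext-Tor adjunction $D\Tor{R}{P}{N} \cong \Ext{R}{N}{DP}$, valid for a finitely presented right $R$-module $P$ and an arbitrary left $R$-module $N$, the vanishing of $\Tor{R}{P}{N}$ is equivalent to the vanishing of $\Ext{R}{N}{DP}$. Letting $P$ range over $\p$, the condition $N \in \mathcal{Y}$ becomes $\Ext{R}{N}{Q} = 0$ for all $Q \in \q^\prime$, which is exactly the defining condition $N \in \mathcal{C} = {}^\perp(\q^\prime)$.

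With $\mathcal{Y} = \mathcal{C}$ in hand, the first assertion is immediate. For the second, observe that over the noetherian ring $R$ every countably generated module is countably presented; and the identification $\mathcal{F} = \Ker \Tor{R}{-}{\tube^\prime}$, combined with $\tube^\prime \subseteq \mathcal{C}$ and the closure of Tor-orthogonal classes under extensions and direct limits, ensures that any countably generated torsion-free module lies in the Tor-orthogonal class of $\mathcal{C}$, hence satisfies the hypotheses of the converse direction of \cite[9.5]{relml}. The main obstacle is the identification $\mathcal{Y} = \mathcal{C}$, which requires careful bookkeeping of the side-switching in the Ext-Tor adjunction under $D$; once this is established, the rest is a mechanical application of the cited general result.
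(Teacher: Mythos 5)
Your proposal is correct and follows essentially the same route as the paper, which states this Proposition without a written proof, presenting it as a direct consequence of the general result \cite[9.5]{relml} on cotorsion pairs of finite type applied to $(\mathcal B,\mathcal L)$. The only content you add is the explicit identification of the Tor-orthogonal class $\{N\in R\text{-Mod}\mid \Tor RPN=0 \text{ for all } P\in\p\}$ with $\mathcal C={}^\perp(\q^\prime)$ via $D\Tor RPN\cong\Ext RN{D(P)}$, which is precisely the bookkeeping the authors leave implicit (and which they carry out in the same way elsewhere, e.g.\ in the computation $\Tor R{Z_\alpha}C\cong D\Ext RC{D(Z_\alpha)}$).
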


\medskip

{}The next Lemma shows that over an Artin algebra the notions
of separable module, Mittag-Leffler module and strict Mittag-Leffler
module coincide.

\begin{lem}\label{separable} Let $R$ be a ring. For a given right $R$-module $M$, consider the following statements:
\begin{itemize}
\item[(i)] $M$ is separable.
\item[(ii)] $M$ is strict Mittag-Leffler.
\item[(iii)] $M$ is Mittag-Leffler.
\end{itemize}
Then  $(i)$ implies $(ii)$, and $(ii)$ implies $(iii)$. If all finitely presented right $R$-modules are pure injective,
then the three statements are equivalent.
\end{lem}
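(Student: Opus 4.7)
The plan is to verify (i)$\Rightarrow$(ii)$\Rightarrow$(iii) unconditionally, and then, under the hypothesis that every finitely presented module is pure-injective, close the cycle by proving (iii)$\Rightarrow$(ii) and (ii)$\Rightarrow$(i). The main obstacle is the final step, where one must extract from the factorization characterization of strict Mittag-Leffler an actual direct summand decomposition of $M$.

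For (i)$\Rightarrow$(ii), given a pure-epimorphism $\pi\colon X\to M$ and a finite $F\subseteq M$, I split off a finitely presented summand $M=N\oplus N'$ with $F\subseteq N$; the inclusion $N\hookrightarrow M$ lifts through $\pi$ (finitely presented modules are projective with respect to pure epimorphisms), and composing with the projection $M\to N$ gives the required locally splitting of $\pi$ at $F$. For (ii)$\Rightarrow$(iii), I first observe the reformulation: $M$ is strict Mittag-Leffler iff for every finite $F\subseteq M$ there exist a finitely presented $P$ and maps $f\colon M\to P$, $g\colon P\to M$ with $gf|_F=\mathrm{id}_F$. (Apply local splitting to a pure-epimorphism from a direct sum of finitely presented modules; since $F$ is finite, the splitting lands in a finite sub-sum.) Then for $z=\sum_k m_k\otimes q^{(k)}$ in the kernel of $\rho\colon M\otimes\prod Q_i\to\prod(M\otimes Q_i)$, the factorization with $F=\{m_k\}$ combined with the bijectivity of the canonical map $P\otimes\prod Q_i\to\prod(P\otimes Q_i)$ (valid because $P$ is finitely presented) forces $(f\otimes 1)(z)=0$ in $P\otimes\prod Q_i$, and applying $g\otimes 1$ recovers $z=(gf\otimes 1)(z)=0$.

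Under the hypothesis, for (iii)$\Rightarrow$(ii) I invoke the Raynaud-Gruson characterization: if $M$ is Mittag-Leffler then for every finite tuple $F\subseteq M$ there exist a finitely presented $P$, a tuple $p\in P^{|F|}$ and a map $g\colon P\to M$ sending $p$ to $F$ such that $p$ and $F$ realize the same pp-type. Since $P$ is pure-injective (hence algebraically compact), this pp-type coincidence lifts to a homomorphism $f\colon M\to P$ with $f(F)=p$; then $gf|_F=\mathrm{id}_F$, which by the factorization criterion above gives (ii).

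For (ii)$\Rightarrow$(i), starting from $M\xrightarrow{f}P\xrightarrow{g}M$ with $P$ finitely presented pure-injective and $gf|_F=\mathrm{id}_F$, I apply Fitting's lemma to $b=fg\in\End(P)$ (valid in the motivating Artin algebra setting, where $P$ has finite length, and more generally whenever the finitely presented pure-injective $P$ admits such a decomposition) to obtain $P=P_s\oplus P_n$ with $b|_{P_s}$ an automorphism, $b|_{P_n}$ nilpotent, and $f(F)\subseteq P_s$. Setting $\sigma=b_s^{-1}\pi_s$ (with $\pi_s\colon P\to P_s$ the projection and $b_s^{-1}$ extended by zero on $P_n$) and $u=g\sigma f\colon M\to M$, the identities $\pi_s b=b\pi_s$ and $b_s^{-1}b=\pi_s$ yield $u^2=u$, while the $b$-fixity of $f(F)$ forces $u|_F=\mathrm{id}_F$. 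Thus $u(M)$ is a direct summand of $M$ containing $F$, and the maps $\sigma f\colon u(M)\to P_s$ and $u\circ g|_{P_s}\colon P_s\to u(M)$ compose to $\mathrm{id}_{u(M)}$, so $u(M)$ is also a direct summand of the finitely presented $P_s$ and therefore itself finitely presented. This produces the separability witness.
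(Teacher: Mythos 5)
Your (i)$\Rightarrow$(ii) is the paper's argument verbatim in substance: split off a finitely presented summand $N$ containing $F$, lift the inclusion $N\hookrightarrow M$ through the pure epimorphism by pure-projectivity of $N$, and compose with the retraction. For everything else the paper merely cites: Raynaud--Gruson for (ii)$\Rightarrow$(iii) and Zimmermann [Z, 2.11] for the equivalence under the hypothesis. You instead supply direct arguments, and two of them are sound: the proof of (ii)$\Rightarrow$(iii) via the factorization criterion $M\xrightarrow{f}P\xrightarrow{g}M$ with $gf|_F=\mathrm{id}_F$ and the bijectivity of $P\otimes_R\prod Q_i\to\prod(P\otimes_RQ_i)$ for finitely presented $P$ is correct, and so is (iii)$\Rightarrow$(ii): a Mittag-Leffler module has finitely generated pp-types, a finitely generated pp-type has a free realization $(P,p)$ with $P$ finitely presented, and pure-injectivity of $P$ lifts the pp-type equality to a homomorphism $M\to P$ sending $F$ to $p$. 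This is more informative than the paper's citations.

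The genuine gap is in (ii)$\Rightarrow$(i). The hypothesis ``every finitely presented right $R$-module is pure-injective'' does not force finitely presented modules to have finite length, nor their endomorphism rings to admit Fitting decompositions, so the decomposition $P=P_s\oplus P_n$ for $b=fg$ is unjustified in the stated generality. Concretely, over the ring $\Z_p$ of $p$-adic integers every finitely presented module is pure-injective (finitely generated modules over a complete noetherian local ring are linearly compact), yet $\End(\Z_p)\cong\Z_p$ and multiplication by $p$ is injective, non-surjective and non-nilpotent, so it admits no splitting into an automorphism plus a nilpotent part; nothing in your argument shows that the particular endomorphisms $fg$ you encounter are better behaved. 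You flag the restriction yourself, and since the paper only applies the lemma to artin algebras (where $P$ has finite length and Fitting applies, making your computation of the idempotent $u=g\sigma f$ and the identification of $u(M)$ as a finitely presented summand of $P_s$ entirely correct), the gap is harmless in context; but to prove the lemma as stated one needs an argument that does not pass through a Fitting decomposition of $P$, which is exactly what the citation of Zimmermann [Z, 2.11] is covering for.
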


\begin{proof} In order to show $(i)\Rightarrow (ii)$ let $\pi \colon X\to
M$ be a pure epimorphism. If $F$ is a finite subset of $M$ then, by
hypothesis,  $F$ is contained is a finitely presented direct summand
$N$ of $M$. Denote by $\varepsilon \colon N\to M$ the inclusion and
by $f \colon M\to N$ any map such that $f \varepsilon =\mathrm{Id}$.
As $N$ is pure projective, there exists $g\colon N\to X$ such that
$\pi g=\varepsilon$. Set $\varphi= gf$. Then
$\pi \varphi (x)= \pi (gf)\varepsilon (x)
=\varepsilon (x)=x$ for any $x\in F$. This show that $\pi $ is a
locally split epimorphism. Therefore, $M$ is strict Mittag-Leffler.

The implication $(ii)\Rightarrow (iii)$ is due to Raynaud and Gruson
\cite{RG}. The remaining part of the statement is due to Zimmermann
\cite[2.11]{Z}.
\end{proof}

\bigskip

We now aim at a classification of all Mittag-Leffler modules over a tame hereditary algebra $R$. This is obtained in several steps. We start with a classification of the torsion-free Mittag-Leffler modules. A partial result in this direction was already achieved in \cite[3.1]{Okohsep}.

\begin{prop}\label{strict} Let $R$ be a tame hereditary artin algebra.
The following statements are equivalent for a  module $M$.
\begin{enumerate}
\item $M$ is torsion-free (strict) Mittag-Leffler.
\item $M$ is a locally split epimorphic image of a direct sum of indecomposable preprojective modules.
\item $M$ is an s-pure submodule of a direct product  of indecomposable preprojective modules.
\end{enumerate}
Moreover, the torsion-free (strict) Mittag-Leffler modules form a
preenveloping class which is closed under products and pure
submodules.
\end{prop}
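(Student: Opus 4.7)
The strategy is to establish the cycle (1)$\Rightarrow$(2)$\Rightarrow$(1) together with (1)$\Rightarrow$(3)$\Rightarrow$(1), and then read off the ``moreover'' statement. For (1)$\Rightarrow$(2) I would simply apply Corollary~\ref{corola}: torsion-freeness yields a pure epimorphism $\pi\colon \bigoplus_i P_i\to M$ with $P_i\in\p$, and the strict Mittag-Leffler hypothesis forces $\pi$ to be locally split. For (2)$\Rightarrow$(1), torsion-freeness is immediate since $\mathcal F=\varinjlim\add\p$ is closed under pure-epimorphic images (locally split $\Rightarrow$ pure). The strict Mittag-Leffler property then comes from a pullback trick: given any pure epimorphism $\rho\colon Y\to M$, the pullback projection $Y\times_M(\bigoplus_i P_i)\to\bigoplus_i P_i$ is a pure epimorphism onto a pure-projective module, hence splits; composing this splitting with a local section of $\pi$ at any finite $F\subseteq M$ produces a local section of $\rho$ at $F$.

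For (1)$\Rightarrow$(3), Lemma~\ref{separable} allows me to assume $M$ is separable. I then consider the evaluation map $\varepsilon\colon M\to \prod_{(P,f)}P$ indexed by all pairs $(P,f)$ with $P\in\p$ and $f\in\Hom RMP$. Injectivity at any $0\neq m\in M$ follows by placing $m$ in a finitely presented direct summand $N\in\add\p$ of $M$ and projecting further onto an indecomposable summand witnessing $m$. For s-purity at a finite subset $F\subseteq M$, I choose a finitely presented direct summand $N=P_1\oplus\cdots\oplus P_k\in\add\p$ of $M$ with $F\subseteq N$; the $k$ projections $M\to N\to P_j$ appear as coordinates of $\varepsilon$, so projecting $\prod_{(P,f)}P$ to these $k$ coordinates (landing in $N$) and then including $N\hookrightarrow M$ yields a map $\prod_{(P,f)}P\to M$ whose composition with $\varepsilon$ is the identity on $F$. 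For (3)$\Rightarrow$(1), torsion-freeness follows from closure of $\mathcal F$ under products and submodules, while the strict Mittag-Leffler property comes from the classical facts that pure submodules of Mittag-Leffler modules are Mittag-Leffler and that a product of finitely presented modules over a noetherian ring is Mittag-Leffler (see \cite{RG}), combined with Lemma~\ref{separable}.

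For the ``moreover'' claims, closure under products is transparent from (3) by concatenating product indices: an s-pure embedding $\prod_\alpha M_\alpha\hookrightarrow \prod_{(\alpha,i)}P_{i,\alpha}$ is obtained from s-pure embeddings of each factor. Closure under pure submodules follows from (1) together with the Raynaud--Gruson stability of Mittag-Leffler modules under pure submodules and Lemma~\ref{separable} (using also that $\mathcal F$ is closed under submodules). Preenveloping is then obtained by the standard method for classes closed under products admitting a cardinality bound: any map from $X$ into a torsion-free strict Mittag-Leffler module $N$ factors through a pure submodule of $N$ of size bounded in terms of $|X|$ and $|R|$ (and such a submodule is again torsion-free strict Mittag-Leffler by closure under pure submodules), so the product over a set of representatives yields a preenvelope. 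The main obstacle I expect is step (3)$\Rightarrow$(1), specifically locating the precise reference guaranteeing that $\prod P_i$ is Mittag-Leffler; a secondary delicate point is the cardinality bound underlying the preenveloping construction.
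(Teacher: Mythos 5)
Most of your cycle is sound, and it is considerably more explicit than the paper's proof, which simply invokes \cite[2.3 and 3.4]{via} for the module $P=\bigoplus_{X\in\p}X$: your derivation of (1)$\Rightarrow$(2) from Corollary~\ref{corola}, the pullback argument for (2)$\Rightarrow$(1), the evaluation map $M\to\prod_{(P,f)}P$ for (1)$\Rightarrow$(3), and the deduction of the preenveloping and closure statements are all correct as far as they go. The problem is concentrated exactly where you suspected, in (3)$\Rightarrow$(1), and it is not a matter of locating a reference: the fact you want to quote is false. A product of finitely presented modules over a noetherian ring need not be Mittag--Leffler. For example, $\prod_{n\geq 1}\mathbb{Z}/p^n\mathbb{Z}$ contains the ring $\mathbb{Z}_p$ of $p$-adic integers as a pure subgroup (via $x\mapsto (x \bmod p^n)_n$), and $\mathbb{Z}_p$ is not Mittag--Leffler over $\mathbb{Z}$: any pure subgroup containing $1$ contains $\mathbb{Q}\cap\mathbb{Z}_p=\mathbb{Z}_{(p)}$, which embeds in no free abelian group, so finite subsets of $\mathbb{Z}_p$ cannot sit inside countably generated free pure submodules as flat Mittag--Leffler modules require. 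The failure persists for artin algebras: products of indecomposable preprojective modules over \emph{wild} hereditary algebras need not be separable, hence by Lemma~\ref{separable} not Mittag--Leffler (this is the subject of \cite{Okohsep} and \cite{OL}).

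Consequently, the assertion that a direct product of modules from $\p$ is (strict) Mittag--Leffler is a genuinely tame-hereditary phenomenon and is essentially the non-trivial content of the implication (3)$\Rightarrow$(1). The paper gets it from the fact that $P=\bigoplus_{X\in\p}X$ is noetherian over its endomorphism ring (equivalently, $D(P)$ is $\Sigma$-pure-injective), which by \cite[2.3 and 3.4]{via} identifies the torsion-free (strict) Mittag--Leffler modules with the locally split epimorphic images of modules in $\Add P$ and with the s-pure submodules of products of modules in $\add\p$. You need this endonoetherian input (or the Lenzing--Okoh separability theorem) to close your cycle; note also that your proof of closure under products rests on (3) being an equivalent characterization, so that claim inherits the same gap.
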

\begin{proof}
Let $P$ be the direct sum of the modules in $\p$. It is well known that
$P$ is noetherian over its endomorphism ring. This follows from the
fact that its dual module is $\Sigma$-pure-injective \cite[4.6]{Le2}
by using \cite[Proposition 3 and Observation 8]{ZHZ}. By
\cite[2.3]{via}, a torsion-free module is (strict) Mittag-Leffler if
and only if it belongs to the class
 ${\mathcal G}(P)$ of all locally split epimorphic images of modules
in Add$P$.
Moreover,  it follows from \cite[3.4]{via} that ${\mathcal G}(P)$ is preenveloping and
 coincides with the class of all s-pure submodules of  direct products
of finitely generated preprojective modules.
The stated  closure properties follow from condition (3) and  \cite[2.3.3]{RG} or \cite[2.3]{via}.
\end{proof}

\begin{ex} {\rm (1) The modules in   Proposition~\ref{strict} are preprojective, but not
necessarily Baer, see Example \ref{Q}. Also,
there are preprojectives that are not separable, see \cite[Introduction]{Okohsep}. A
characterization of pure-projective preprojectives can be found in
\cite[2.4]{Okohsep}. \\
(2) The module $L$ is an example of a Baer module that is not Mittag-Leffler,
because $L$ is countably generated and not pure-projective, see \cite[2.2.2]{RG}.}
\end{ex}

\bigskip

 Recall   that every right $R$-module is a direct sum $X\oplus Y$ of a reduced module $X$ and a divisible module $Y$, see \cite{R}.
Next, we  investigate the torsion reduced Mittag-Leffler modules.

\begin{prop}\label{reducedML} Let $R$ be a tame hereditary artin algebra.
The following statements are equivalent for a  module $M$.
\begin{enumerate}
\item $M$ is a torsion reduced (strict) Mittag-Leffler module.
\item $M$ is a locally split epimorphic image of a direct sum of indecomposable
finitely generated regular
modules.
%\item $M$ is a torsion reduced strict Mittag-Leffler module.
\end{enumerate}
\end{prop}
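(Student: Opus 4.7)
The plan is to mirror the proof of Proposition \ref{strict}, now replacing the preprojective reference module $P=\bigoplus_{P'\in\p}P'$ with its regular counterpart $T=\bigoplus_{U\in\tube}U$, and invoking Lemma \ref{separable} in order to freely pass between Mittag-Leffler, strict Mittag-Leffler, and separable modules (since $R$ is an artin algebra, so all finitely presented modules are pure-injective).

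For $(2)\Rightarrow(1)$, suppose $\pi:T^{(I)}\to M$ is a locally split epimorphism. Then $M\in\Gen\tube$, so $M$ is torsion. To see $M$ is strict Mittag-Leffler, use the standard lifting trick: for any pure epimorphism $p:Y\to M$, pure-projectivity of $T^{(I)}$ yields $g:T^{(I)}\to Y$ with $pg=\pi$; composing $g$ with a local section of $\pi$ over a given finite $F\subseteq M$ produces a local section of $p$ over $F$. For reducedness, a nonzero divisible direct summand $D$ of $M$ would inherit both "torsion" and "strict Mittag-Leffler", hence be separable by Lemma \ref{separable}; picking a nonzero element of $D$ inside a finitely presented direct summand $N\subseteq D$, and decomposing $N$ according to Ringel's classification into its $\add\tube$-part and its finitely generated preinjective part, the first part must vanish (no nonzero object of $\add\tube$ is divisible, since simple regulars admit nontrivial self-extensions), and the second, being a finitely generated locally split epi image of $T^{(I)}$, is automatically a direct summand of $T^{(I)}$, contradicting Ringel's vanishing of morphisms from preinjectives to regulars.

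For $(1)\Rightarrow(2)$, let $M$ be torsion, reduced, and strict Mittag-Leffler, hence separable by Lemma \ref{separable}. Set $I=\Hom RTM$ and let $\phi:T^{(I)}\to M$ be the canonical evaluation map. For surjectivity: each $x\in M$ lies in some finitely presented direct summand $N_x$ of $M$, which is torsion and finitely presented, hence---by the reducedness of $M$ combined with the argument above---lies in $\add\tube$, so $x$ is covered by a suitable element of $I$. For local splitting: given finite $F\subseteq M$, fix a finitely presented summand $N_F\supseteq F$ with retraction $\rho_F:M\to N_F$, decompose $N_F=\bigoplus_j U_j$ with $U_j\in\tube$, and let $f_j:T\twoheadrightarrow U_j\hookrightarrow N_F\hookrightarrow M$ be the corresponding elements of $I$. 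The map $\psi:N_F\to T^{(I)}$ sending each $u\in U_j$ into the $U_j$-component of the $f_j$-th copy of $T$ satisfies $\phi\psi=(N_F\hookrightarrow M)$; thus $\psi\rho_F:M\to T^{(I)}$ is a local section of $\phi$ on $F$.

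The main technical obstacle is the reducedness part of $(2)\Rightarrow(1)$: it requires combining the Ringel decomposition of finitely presented torsion modules into regular and preinjective parts with Ringel's vanishing of $\Hom R{-}{-}$ from preinjectives to regulars, in order to rule out both nonzero regular and nonzero preinjective summands of a hypothetical divisible summand. Once that is established, the bookkeeping in $(1)\Rightarrow(2)$ is immediate: the choice $I=\Hom RTM$ is precisely what allows each regular summand of each finitely presented summand of $M$ to appear tautologically as a direct summand of $T^{(I)}$, making the candidate local section $\psi\rho_F$ visible.
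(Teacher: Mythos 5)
Your proof is correct and follows the same overall strategy as the paper's: both reduce to separability via Lemma \ref{separable}, both rest on the fact that a finitely generated torsion reduced module lies in $\add\tube$, and both get the strict Mittag--Leffler property in $(2)\Rightarrow(1)$ from pure-projectivity of the source (the paper just cites \cite[2.3]{via} for this). The differences are minor. For $(1)\Rightarrow(2)$ the paper takes the locally split epimorphism to be $\bigoplus_{0\neq x\in M}M_x\to M$ built from the finitely presented summands themselves, whereas you use the trace map $T^{(\Hom RTM)}\to M$ with explicit local sections $\psi\rho_F$; both work, yours just requires the extra bookkeeping you describe. For reducedness in $(2)\Rightarrow(1)$ the paper is slicker: since divisible means lying in $^o\tube$, a nonzero divisible submodule $D$ and $0\neq d\in D$ immediately yield, via a local section $\varphi$ with $\pi\varphi(d)=d$, a nonzero map $D\to T^{(I)}$ and hence a nonzero map $D\to U$ for some $U\in\tube$ --- no decomposition of a finitely presented summand into regular and preinjective parts is needed. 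Your longer route is valid, but one parenthetical justification in it is wrong as stated: simple regular modules in a tube of rank at least two have \emph{no} self-extensions ($\Ext RSS=0$ there); the claim you actually need --- that no nonzero module in $\add\tube$ is divisible --- is true and follows either from $\Ext R{\tau^{-1}S}{S}\cong D\End(S)\neq 0$ or, more simply, from the identity map witnessing $U\notin{}^o\tube$. With that justification repaired, the argument goes through.
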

\begin{proof}
 Recall that every (strict) Mittag-Leffler module is  {separable} by Lemma~\ref{separable}. \\
(1) $\Rightarrow$ (2): For every $x\in M$ there is a finitely
generated direct summand $M_x$ of $M$ containing $x$. Then it is
easy to see that the map $\bigoplus_{0\not=x\in M} M_x\to M$ given
by the canonical inclusions $M_x\to M$ is a locally split
epimorphism. Moreover, the finitely generated modules $M_x$  are
again torsion reduced, hence direct sums of modules from
$\tube$.\\
 (2)$\Rightarrow$(1):  Of course, the class of all torsion modules is closed under direct sums and epimorphic images.
 Moreover,
 using that a  module is divisible if and only if it belongs to $^o\tube$, we easily verify that locally split
 epimorphic images of
   modules in $\Add \tube$ are reduced. Hence $M$ is torsion reduced, and it is   (strict) Mittag-Leffler by
   \cite[2.3]{via}.
\end{proof}

%\begin{ex}
%{\rm It is easy to see that every locally split epimorphic image of a direct sum of indecomposable
%finitely generated regular
%modules is also an s-pure submodule of a direct product  of finitely generated regular
%modules, cf. \cite[p. 254]{via}. However, the converse is not true. In fact,
% CHECK!!!   every adic module   is a direct summand of a direct product  of finitely generated regular
%modules, but it is obviously not separable. So, there is no analogue of Lemma \ref{GCP}.}
%\end{ex}

\bigskip

Let us give a further
 characterization of the torsion-free, and torsion reduced modules,
respectively, that are Mittag-Leffler.
To this aim, we need some preliminaries.
Recall that an inverse system of sets
$(H_\alpha,h_{\alpha\,\gamma})_{\alpha\,\gamma\in I  }$ is said to
satisfy the \emph{Mittag-Leffler condition} if for any $\alpha \in I
$ there exists $\beta\ge \alpha$ such that
$h_{\alpha\,\gamma}(H_\gamma)= h_{\alpha\,\beta}(H_{\beta})$  for
any $\gamma \ge \beta$.

It is shown in \cite{RG} that a right $R$-module $M$ is
Mittag-Leffler if and only if there exists a   direct system of
finitely presented modules $(F_\alpha,u_{\beta \,\alpha})_{\beta
\,\alpha \in I  }$ with $M=\varinjlim F_\alpha$ such that
 the inverse system $(\Hom R {F_\alpha} B, \Hom R {u_{\beta
\,\alpha}} B)_{\beta \,\alpha \in I  }$ satisfies the Mittag-Leffler
condition for any right $R$-module $B$.

In the next  Proposition we present a refinement
of this result from \cite{relml}. It will allow us to establish a situation
in which we can restrict to   special choices of $B$ for proving that a
module is Mittag-Leffler.

\begin{prop} \label{Prop1}  \emph{\cite[6.2]{relml}}  Let $R$ be a ring. Let $\mathcal{B}$ be a class of right
$R$-modules closed under direct sums, and let $\mathcal{Q}$ be a
class of left $R$-modules. Assume there exists a   direct system of
finitely presented modules $(F_\alpha,u_{\beta \,\alpha})_{\beta
\,\alpha \in I  }$ with $M=\varinjlim F_\alpha$ such that
 the inverse system $(\Hom R {F_\alpha} B, \Hom R {u_{\beta
\,\alpha}} B)_{\beta \,\alpha \in I  }$ satisfies the Mittag-Leffler
condition for any right $R$-module $B\in \mathcal{B}$.

If for every $Q\in\mathcal{Q}$ and every $\alpha\in I$ there exists
a map $f_\alpha\colon F_\alpha\to B_\alpha$ such that
$B_\alpha\in\mathcal{B}$ and $f_\alpha\otimes_RQ$ is a monomorphism,
then $M$ is a $\mathcal{Q}$-Mittag-Leffler module.
\end{prop}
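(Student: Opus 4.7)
The plan is to verify that the canonical map $\rho\colon M\otimes_R\prod_{i\in J}Q_i\to\prod_{i\in J}(M\otimes_RQ_i)$ is injective for every family $\{Q_i\}_{i\in J}$ of modules in $\mathcal{Q}$. Given $x\in\Ker\rho$, lift it to $\tilde x\in F_\alpha\otimes_R\prod_iQ_i$ for some $\alpha\in I$. Since $F_\alpha$ is finitely presented, the natural map $F_\alpha\otimes_R\prod_iQ_i\to\prod_i(F_\alpha\otimes_RQ_i)$ is an isomorphism, so $\tilde x=(x_i)_i$ with each $x_i\in F_\alpha\otimes_RQ_i$, and the assumption $\rho(x)=0$ translates into $x_i\mapsto 0$ in $M\otimes_RQ_i$ for every $i$. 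The goal is to find a single $\beta\geq\alpha$, independent of $i$, such that each $x_i$ already vanishes in $F_\beta\otimes_RQ_i$; then $\tilde x$ maps to $0$ in $F_\beta\otimes_R\prod_iQ_i$, forcing $x=0$.

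The key to producing such a uniform $\beta$ is the closure of $\mathcal{B}$ under arbitrary direct sums. Apply the second hypothesis to each pair $(\alpha,Q_i)$ to obtain $f_i\colon F_\alpha\to B_i$ with $B_i\in\mathcal{B}$ and $f_i\otimes_RQ_i$ injective, and set $B=\bigoplus_{i\in J}B_i\in\mathcal{B}$. Because each $F_\gamma$ is finitely generated, $\Hom R{F_\gamma}{B}\cong\bigoplus_i\Hom R{F_\gamma}{B_i}$, and the Mittag-Leffler hypothesis for $B$ then provides a single $\beta\geq\alpha$ such that for every $\gamma\geq\beta$ the images of $\Hom R{F_\gamma}{B_i}\to\Hom R{F_\alpha}{B_i}$ (induced by $u_{\gamma\alpha}$) stabilize in $\Hom R{F_\alpha}{B_i}$, uniformly in $i$.

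The main obstacle is transferring this Hom-level stabilization to the desired vanishing of $x_i$ in $F_\beta\otimes_RQ_i$. For this one uses the character-module identification $(F_\gamma\otimes_RQ_i)^+\cong\Hom R{F_\gamma}{Q_i^+}$, where $(-)^+=\Hom_{\mathbb{Z}}(-,\mathbb{Q}/\mathbb{Z})$; dually, the stabilization of the kernels $\Ker(F_\alpha\otimes_RQ_i\to F_\gamma\otimes_RQ_i)$ is equivalent to the Mittag-Leffler condition for the inverse system $(\Hom R{F_\gamma}{Q_i^+})_\gamma$. The injectivity of $f_i\otimes_RQ_i$ dualises to a surjection $\Hom R{B_i}{Q_i^+}\to\Hom R{F_\alpha}{Q_i^+}$, and a diagram-chase combining this surjection with the Mittag-Leffler stabilization of $(\Hom R{F_\gamma}{B_i})_\gamma$ at the common index $\beta$ propagates the stabilization from $B_i$ to $Q_i^+$. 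This yields $\Ker(F_\alpha\otimes_RQ_i\to F_\beta\otimes_RQ_i)=\Ker(F_\alpha\otimes_RQ_i\to M\otimes_RQ_i)$ uniformly for every $i$, whence the uniform $\beta$ works and the proof concludes.
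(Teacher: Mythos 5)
The paper does not actually prove this proposition --- it is quoted verbatim from \cite[6.2]{relml} --- so your argument has to stand on its own. Its overall architecture is the right one: reduce injectivity of $\rho$ to finding a single index $\beta\ge\alpha$ at which all the components $x_i$ die, and manufacture that uniform $\beta$ by feeding one big direct sum $B\in\mathcal B$ into the Mittag--Leffler hypothesis, using that $\Hom R{F_\gamma}{\bigoplus_i B_i}\cong\bigoplus_i\Hom R{F_\gamma}{B_i}$ splits the stabilization componentwise. Up to that point everything is correct.

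The gap is in the transfer step, and it comes from applying the domination hypothesis only at the index $\alpha$. You take $f_i\colon F_\alpha\to B_i$ with $f_i\otimes_RQ_i$ mono and dualize to a surjection $\Hom R{B_i}{Q_i^+}\to\Hom R{F_\alpha}{Q_i^+}$. But that surjection only lets you factor maps $F_\alpha\to Q_i^+$ through $f_i$, and $f_i$ has no relation to the direct system: there is no reason for $f_i$ to lie in $\Img(\Hom R{u_{\beta\alpha}}{B_i})$, so the stabilization $\Img(\Hom R{u_{\gamma\alpha}}{B_i})=\Img(\Hom R{u_{\beta\alpha}}{B_i})$ cannot be applied to it, and the claimed propagation to the character module $Q_i^+$ does not follow. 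What the argument actually needs is a domination map \emph{out of $F_\beta$}: given $h\in\Hom R{F_\beta}{Q_i^+}$, write $h=\psi f^i_\beta$ with $f^i_\beta\colon F_\beta\to B^i_\beta$ and $f^i_\beta\otimes_RQ_i$ mono; then $f^i_\beta u_{\beta\alpha}\in\Img(\Hom R{u_{\beta\alpha}}{B^i_\beta})=\Img(\Hom R{u_{\gamma\alpha}}{B^i_\beta})$ gives $f^i_\beta u_{\beta\alpha}=gu_{\gamma\alpha}$, hence $hu_{\beta\alpha}=\psi gu_{\gamma\alpha}$. (Equivalently, without dualizing: from $(u_{\gamma\alpha}\otimes Q_i)(x_i)=0$ one gets $(f^i_\beta\otimes Q_i)\bigl((u_{\beta\alpha}\otimes Q_i)(x_i)\bigr)=(g\otimes Q_i)\bigl((u_{\gamma\alpha}\otimes Q_i)(x_i)\bigr)=0$ and concludes by injectivity of $f^i_\beta\otimes Q_i$.) Since $\beta$ is only determined after $B$ is chosen, you must put the codomains $B^i_\delta$ for all $\delta\in I$, not just $\delta=\alpha$, into the direct sum $B$; this is exactly why the hypothesis supplies a domination map at every index. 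With that modification your proof closes; as written, the final ``diagram-chase'' is asserted but cannot be carried out.
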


\begin{prop}\label{limit} Let $R$ be a ring, and let $\mathcal{S}$ be a class of finitely presented
modules. Let $B$   be a module such that $\mathcal{S}\subseteq
\mathrm{Add}\, (B)$.  If $M$ is a module in $\varinjlim \mathcal{S}$,
then the following statements are equivalent:
\begin{itemize}
\item[(1)] $M$ is Mittag-Leffler.
\item[(2)] There exists a   direct system of
%finitely presented
modules  $(F_\alpha,u_{\beta \,\alpha})_{\beta
\,\alpha \in I  }$ in $\mathcal{S}$ with $M=\varinjlim F_\alpha$ such that
the inverse system
$(\Hom R {F_\alpha} B, \Hom R {u_{\beta
\,\alpha}} B)_{\beta \,\alpha \in I  }$ satisfies the Mittag-Leffler
condition.
\item[(3)] For any   direct system of
finitely presented modules $(F_\alpha,u_{\beta \,\alpha})_{\beta
\,\alpha \in I  }$ with $M=\varinjlim F_\alpha$, the inverse system
  $(\Hom R {F_\alpha} B, \Hom R {u_{\beta
\,\alpha}} B)_{\beta \,\alpha \in I  }$ satisfies the Mittag-Leffler
condition.
\end{itemize}
\end{prop}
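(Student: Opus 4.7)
The plan is as follows. The implication (3) $\Rightarrow$ (2) is immediate: since $M \in \varinjlim \mathcal{S}$, there is a direct system $(F_\alpha, u_{\beta\,\alpha})$ in $\mathcal{S}$ with $M = \varinjlim F_\alpha$, and (3) applied to this particular system of finitely presented modules yields (2).

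For (1) $\Rightarrow$ (3), I would invoke the ``for all direct systems'' form of the Raynaud-Gruson characterization: a Mittag-Leffler module $M$ has the property that for \emph{every} direct system $(F_\alpha, u_{\beta\,\alpha})$ of finitely presented modules with $M = \varinjlim F_\alpha$ and for every right $R$-module $B$, the inverse system $(\Hom R{F_\alpha}B, \Hom R{u_{\beta\,\alpha}}B)$ satisfies the Mittag-Leffler condition. This strengthening of the ``there exists'' version recalled just before Proposition~\ref{Prop1} is standard: the inverse systems arising from any two direct systems of finitely presented modules with colimit $M$ are equivalent as pro-systems, since each finitely presented $F_\alpha$ factors through a term of the other system (and vice versa), so the Mittag-Leffler condition transfers between them.

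The substantive implication is (2) $\Rightarrow$ (1), and I would prove it by applying Proposition~\ref{Prop1} with the class of right modules taken to be $\mathrm{Add}\,(B)$ and the class of left modules taken to be all of $R\text{-Mod}$. Two hypotheses have to be verified. First, the Mittag-Leffler condition on $(\Hom R{F_\alpha}B)$ must extend to every module in $\mathrm{Add}\,(B)$: since each $F_\alpha$ is finitely generated, $\Hom R{F_\alpha}{B^{(I)}}$ identifies with $\Hom R{F_\alpha}{B}^{(I)}$ as an inverse system, so the condition passes from $B$ to every direct sum $B^{(I)}$; it descends further to direct summands because a summand decomposition of an inverse system splits the Mittag-Leffler property into each summand. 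Second, each $F_\alpha$ lies in $\mathcal{S} \subseteq \mathrm{Add}\,(B)$, hence is a direct summand of some $B^{(I_\alpha)}$; I would take $f_\alpha\colon F_\alpha \hookrightarrow B^{(I_\alpha)}$ to be the split inclusion, so that $f_\alpha \otimes_R Q$ is split injective for every left $R$-module $Q$, which provides the required monomorphism. Proposition~\ref{Prop1} then gives that $M$ is $R\text{-Mod}$-Mittag-Leffler, i.e., Mittag-Leffler in the usual sense.

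The main obstacle is the ``for all direct systems'' strengthening of Raynaud-Gruson needed in (1) $\Rightarrow$ (3); beyond that, the only delicate point is the passage of the Mittag-Leffler condition from $B$ to $\mathrm{Add}\,(B)$, which uses finite generation of the $F_\alpha$ essentially and is what makes the hypothesis $\mathcal{S} \subseteq \mathrm{Add}\,(B)$ just strong enough to reduce checking Mittag-Lefflerness to a single test module $B$.
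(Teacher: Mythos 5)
Your proposal is correct and follows essentially the same route as the paper: the substantive implication is handled by extending the Mittag-Leffler condition from $B$ to all of $\mathrm{Add}(B)$ and then invoking Proposition~\ref{Prop1} with $\mathcal{Q}=R\text{-Mod}$, using $\mathcal{S}\subseteq\mathrm{Add}(B)$ to supply the maps $f_\alpha$ (the paper takes $f_\alpha=\mathrm{Id}$ where you take the split inclusion into $B^{(I_\alpha)}$, an immaterial difference). The only divergence is bookkeeping: the paper cites \cite[3.9]{relml} for the passage to $\mathrm{Add}(B)$ and \cite[2.1.5]{RG}, \cite[4.4]{relml} for the transfer of the Mittag-Leffler condition between presentations, whereas you prove these auxiliary facts directly, and your arguments for them are sound.
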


\begin{proof} By \cite[2.1.5]{RG} and \cite[4.4]{relml} we only have to
prove that $(3)$ implies $(1)$.

Fix a direct system $(F_\alpha,u_{\beta \,\alpha})_{\beta \,\alpha
\in I  }$ such that $F_\alpha \in \mathcal{S}$, for any $\alpha \in
I$, and satisfying that $M=\varinjlim F_\alpha$. By
\cite[3.9]{relml}, for any $B'\in \mathrm{Add}\, (B)$ the inverse
system $(\Hom R {F_\alpha} {B'}, \Hom R {u_{\beta \,\alpha}}
{B'})_{\beta \,\alpha \in I }$ satisfies the Mittag-Leffler
condition. Since $F_\alpha \in \mathrm{Add}\, (B)$, we can apply
Proposition~\ref{Prop1}, taking  $f_\alpha =\mathrm{Id}\colon
F_\alpha \to F_\alpha$ and $\mathcal{Q}=R$-Mod, to deduce that $M$
is a Mittag-Leffler module.
\end{proof}

Now  we  come back to tame hereditary algebras.

\begin{cor}Let $R$ be a tame hereditary artin algebra. Let $P$
be the direct sum of the modules in $\p$, and let $T$ be the direct
sum of the modules in $\tube$.

\begin{itemize}
\item[(1)] A torsion-free module $M$ is Mittag-Leffler if and only if there
exists a    direct system of finitely presented modules
$(F_\alpha,u_{\beta \,\alpha})_{\beta \,\alpha \in I  }$ with
$M=\varinjlim F_\alpha$ such that
 the inverse system $(\Hom R {F_\alpha} P, \Hom R {u_{\beta
\,\alpha}} P)_{\beta \,\alpha \in I  }$ satisfies the Mittag-Leffler
condition.

\item[(2)] A torsion  reduced module $M$ is Mittag-Leffler if and only if there
exists a   direct system of finitely presented modules
$(F_\alpha,u_{\beta \,\alpha})_{\beta \,\alpha \in I  }$  with
$M=\varinjlim F_\alpha$ such that
 the inverse system $(\Hom R {F_\alpha} T, \Hom R {u_{\beta
\,\alpha}} T)_{\beta \,\alpha \in I  }$ satisfies the Mittag-Leffler
condition.
\end{itemize}
\end{cor}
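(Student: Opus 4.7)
The plan is to reduce both parts of the corollary to Proposition \ref{limit}, applied twice with appropriate choices of the class $\mathcal{S}$ of finitely presented building blocks and of the test module $B$.

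For part (1), I would take $\mathcal{S} = \add\p$ and $B = P$; the inclusion $\add\p \subseteq \Add(P)$ is immediate. The only remaining hypothesis of Proposition \ref{limit}, namely $M \in \varinjlim \mathcal{S}$, is precisely the identity $\mathcal{F} = \varinjlim \add\p$ recorded near the start of Section~1. Thus torsion-freeness of $M$ delivers the hypothesis for free, and the equivalences in Proposition \ref{limit} translate ``$M$ is Mittag-Leffler'' into the stated Mittag-Leffler condition on the inverse system $(\Hom{R}{F_\alpha}{P}, \Hom{R}{u_{\beta\alpha}}{P})$ for some (equivalently, any) direct system of finitely presented modules with $M = \varinjlim F_\alpha$.

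For part (2), I would take $\mathcal{S} = \add\tube$ and $B = T$; again $\add\tube \subseteq \Add(T)$ is clear. The point requiring a small verification is that every torsion reduced module $M$ lies in $\varinjlim \add\tube$. Since $R$ is Artin, $M$ is the directed union of its finitely generated (hence finitely presented) submodules, each of which inherits torsion-ness from $M$. Over a tame hereditary Artin algebra every finitely generated torsion module decomposes as a finite direct sum of indecomposable modules from $\tube$, all of which are reduced, so these submodules lie in $\add\tube$. This places $M$ in $\varinjlim \add\tube$, and Proposition \ref{limit} once more yields the equivalence.

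I do not anticipate a serious obstacle here: all the substantive content has already been packaged into Proposition \ref{limit}, so what remains is just to identify the right pair $(\mathcal{S}, B)$ in each case and to observe that the torsion-free class and the torsion reduced class are respectively directed limits of $\add\p$ and $\add\tube$. The only mild subtlety is the second of these identities, where the tameness of $R$ enters via the decomposition of finitely generated torsion modules into indecomposable regulars.
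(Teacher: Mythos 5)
Your overall strategy is exactly the paper's: both parts are obtained from Proposition~\ref{limit} with $(\mathcal S,B)=(\add\p,P)$ and $(\mathcal S,B)=(\add\tube,T)$, and part (1) is disposed of precisely as in the paper via the identity $\mathcal F=\varinjlim\add\p$. The problem is in your verification for part (2) that a torsion reduced module $M$ lies in $\varinjlim\add\tube$. You assert that every finitely generated submodule of $M$ ``inherits torsion-ness from $M$.'' That step fails: the torsion class here is $\Gen\tube$, and this torsion pair is not hereditary, i.e.\ $\Gen\tube$ is not closed under submodules. Concretely, over the Kronecker algebra a simple regular module contains the simple projective module as a submodule, and the latter is preprojective, hence torsion-free. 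So the directed family of \emph{all} finitely generated submodules of $M$ will in general contain modules with preprojective summands, which do not lie in $\add\tube$.

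The gap is local and repairable without changing your approach. Since $M\in\Gen\tube$, $M$ is the sum of the images of all homomorphisms from modules in $\tube$ into $M$; the finite sums of such images form a directed family of finitely generated \emph{torsion} submodules whose union is $M$. Each of these is also reduced, because the class of reduced modules is closed under submodules (a fact the paper uses in the proof of Theorem~\ref{culmin}), and a finitely generated torsion reduced module has no preprojective and no preinjective indecomposable summands, hence lies in $\add\tube$. This puts $M$ in $\varinjlim\add\tube$, after which Proposition~\ref{limit} applies as you say. The paper itself sidesteps the issue entirely by citing \cite[3.4]{RR} for the fact that torsion reduced modules belong to $\varinjlim\add\tube$.
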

\begin{proof}
The statements are  immediate
consequences of Proposition~\ref{limit}, because the torsion-free modules
are the modules in $\varinjlim \mathrm{add}\, \p$, and the torsion reduced
modules belong to $\varinjlim \mathrm{add}\, \tube$, see \cite[3.4]{RR}.
\end{proof}

\bigskip

We are now ready to give the announced classification.

\begin{thm}\label{culmin} Let $R$ be a tame hereditary artin algebra.
The (strict) Mittag-Leffler modules over $R$ are precisely the
modules of
 the form $X\oplus Y$ where $Y$ is a direct sum of indecomposable preinjective modules,
and there exists a unique pure-exact sequence
$$\varepsilon: 0 \to X' \hookrightarrow X \to X'' \to 0$$
such that $X'$ is a locally split epimorphic image of a direct sum of indecomposable
finitely generated regular
modules, and $X''$ is an  (s-)pure submodule of a product of indecomposable
preprojective
modules.
\end{thm}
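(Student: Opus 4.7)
The plan is to prove both directions by reducing to the structural results already established: Propositions \ref{strict} and \ref{reducedML} for the characterizations of torsion-free and torsion reduced Mittag-Leffler modules respectively, together with the fact that over an Artin algebra every Mittag-Leffler module is separable (Lemma \ref{separable}).

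For the forward direction, I would first use the split torsion pair (reduced, divisible) to write $M = X \oplus Y$ with $X$ reduced and $Y$ divisible, i.e.\ $Y \in {}^o\tube$. Since $Y$ is a summand of $M$, it too is Mittag-Leffler, hence separable. The class ${}^o\tube$ is closed under direct summands, so every finitely presented direct summand of $Y$ is finite-dimensional and divisible; the classification of finite-dimensional indecomposables forces such a summand to be a finite direct sum of indecomposable preinjectives. A Zorn's lemma argument then maximizes a direct summand of $Y$ of the form $\bigoplus_i Q_i$ with $Q_i$ indecomposable preinjective: any nonzero complement would itself be divisible and separable, producing further preinjective summands and contradicting maximality. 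Hence $Y$ is a direct sum of indecomposable preinjective modules.

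The main obstacle is the analysis of $X$. I would set $X' = t(X)$, the torsion submodule of $X$ with respect to $(\Gen\tube, \mathcal F)$, and $X'' = X/t(X)$; the key point is to prove that $\varepsilon\colon 0\to X'\to X\to X''\to 0$ is pure exact. For this, given any finite subset $F\subseteq t(X)$, separability of $X$ yields a finitely presented direct summand $N\subseteq X$ containing $F$. Since $X$ is reduced so is $N$, and the classification of finite-dimensional reduced modules forces $N = N_p\oplus N_r$ with $N_p\in\add\p$ preprojective and $N_r\in\add\tube$ regular; because $N_p$ is torsion-free, every element of $F$ projects to zero in $N_p$ and hence lies in $N_r = t(N)$, which is itself a direct summand of $X$ contained in $t(X)$. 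The family of finitely generated regular direct summands of $X$ lying inside $t(X)$ is upward directed (applying the same argument to the generators of two given members) with union $t(X)$, so $t(X)$ is a directed union of pure submodules, and therefore pure in $X$. Consequently $X'$ is Mittag-Leffler as a pure submodule of one, and a parallel argument — noting that if $X = N\oplus X_1$ then $X/t(X) = N_p\oplus X_1/t(X_1)$, giving $N_p$ as a finitely presented summand of $X''$ — shows $X''$ is separable, hence Mittag-Leffler. Propositions \ref{reducedML} and \ref{strict} then supply the stated descriptions of $X'$ and $X''$. Uniqueness of $\varepsilon$ is automatic: any pure exact sequence of the prescribed form has torsion kernel and torsion-free cokernel, so it must coincide with $0\to t(X)\to X\to X/t(X)\to 0$.

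For the converse, given $M = X\oplus Y$ of the stated form, $Y$ is a direct sum of finitely presented modules, hence pure-projective, hence Mittag-Leffler. For $X$, given any family $\{Q_i\}_{i\in I}$ of left $R$-modules, purity of $\varepsilon$ makes the top row exact in the commutative diagram
$$
\begin{CD}
0 @>>> X'\otimes_R\prod_{i\in I} Q_i @>>> X\otimes_R\prod_{i\in I} Q_i @>>> X''\otimes_R\prod_{i\in I} Q_i @>>> 0 \\
@. @VVV @VVV @VVV @. \\
0 @>>> \prod_{i\in I}(X'\otimes_R Q_i) @>>> \prod_{i\in I}(X\otimes_R Q_i) @>>> \prod_{i\in I}(X''\otimes_R Q_i) @>>> 0
\end{CD}
$$
while the bottom row is always exact. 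By Propositions \ref{reducedML} and \ref{strict}, both outer vertical canonical maps are injective, so the four-lemma forces the middle vertical map to be injective. Hence $X$, and therefore $M$, is Mittag-Leffler.
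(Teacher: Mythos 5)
Your overall strategy coincides with the paper's: split $M$ into a reduced part $X$ and a divisible part $Y$, identify $X'$ with the trace $\tube X$ of $\tube$ in $X$, prove $X''$ separable by locating each element in a finitely presented summand $N=N_p\oplus N_r$ and using $\Hom R{\tube X}{N_p}=0$ to exhibit $N_p$ as a summand of $X''$, and obtain the converse from closure of (strict) Mittag--Leffler modules under pure extensions. Your two deviations on the reduced part are harmless and make the argument more self-contained: you derive the purity (indeed local splitness) of $0\to \tube X\to X\to X/\tube X\to 0$ directly from separability rather than quoting Ringel's general purity statement \cite[4.1]{R}, and you verify closure under pure extensions by an explicit four-lemma chase rather than citing \cite[8.12]{relml}. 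For the ``strict'' version of the converse add one line: over an artin algebra, Mittag--Leffler, strict Mittag--Leffler and separable coincide by Lemma \ref{separable}, so your diagram argument (which only gives the plain Mittag--Leffler property) suffices.

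The genuine gap is in your treatment of the divisible summand $Y$. Your Zorn's lemma step requires that every chain of direct summands of $Y$ of the form $\bigoplus_i Q_i$, with the $Q_i$ indecomposable preinjective, has an upper bound in that poset; but the union of such a chain is in general only an s-pure submodule of $Y$, not a direct summand, and need not itself decompose into indecomposables. The pattern of reasoning is in fact refuted elsewhere in the paper: applied verbatim to $X''$ (separable, with every finite subset contained in a summand from $\add\p$), it would ``prove'' that every torsion-free separable module is a direct sum of indecomposable preprojectives, whereas the product of all modules in $\p$ is torsion-free separable by Proposition \ref{strict} but is not such a direct sum (a direct sum of modules from $\p$ is Baer, and this product is not, by Example \ref{Q}). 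So the maximality argument cannot stand as written. The conclusion about $Y$ is nevertheless correct; the paper obtains it from Ringel's classification of divisible modules --- every divisible module is a direct sum of indecomposable preinjective modules and of indecomposable infinite-dimensional pure-injective modules (Pr\"ufer and generic) --- combined with the observation that no infinite-dimensional indecomposable module is separable, so separability of $Y$ eliminates all infinite-dimensional summands. Replacing your Zorn argument by this citation closes the gap.
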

\begin{proof}
First of all, we check that every module of the stated form is
(strict) Mittag-Leffler. The module $Y$ is pure-projective, hence
strict Mittag-Leffler.
 The modules $X'$ and $X''$ are strict Mittag-Leffler by Propositions \ref{strict} and \ref{reducedML}. Since the
 class of strict Mittag-Leffler
 modules is closed under pure extensions \cite[8.12]{relml}, it follows that $X$ and $X\oplus Y$ are strict
 Mittag-Leffler.\\
  Conversely, if $M$ is a (strict) Mittag-Leffler module, we write
 $M=X\oplus Y$ as direct sum of a reduced module $X$ and a divisible module $Y$.
 Recall that divisible modules are direct sums of indecomposable preinjective modules and of indecomposable infinite dimensional pure-injective
 modules, see \cite{R}.
 But no indecomposable infinite dimensional module is separable. Hence $Y$ is a direct sum of indecomposable preinjective modules.
As for $X$, we know from \cite[4.1]{R}
that   there is a unique pure-exact sequence
$$0\to \tube X\to X\mapr{\nu} X/ \tube X\to 0$$ where
$\tube X=\sum_{f\in\Hom RYX, Y\in\tube}\Img f \in \text{Gen}\,\tube$ is the trace of $\,\tube$ in $X$, and $X/ \tube X\in \Fcal$.
Since the class of reduced modules is closed under submodules, and the  class of Mittag-Leffler
 modules is closed under pure submodules, we deduce that $X'=\tube X$ is torsion reduced Mittag-Leffler, hence it has the
 stated form by
 Proposition \ref{reducedML}.\\
It remains to prove that $X''=X/ \tube X$ is Mittag-Leffler. Then it will have the stated form by
Proposition \ref{strict}.
 So, we check that $X/ \tube X$ is separable. Let $0\not=\overline{z}\in X/ \tube X$ with $z\in X$. Then $z$ is contained
 in a finitely
 generated direct summand of $X$, so we can write   $X=A\oplus A'\oplus B$ with $A$ finitely generated  torsion-free,
 $A'$ finitely generated torsion,
 and
 $z=a+a'\in A\oplus A'$.
 Since $\Hom R{\tube X}A =0$, we have $\tube X\subseteq A'\oplus B$, which shows that
 $X/ \tube X=\nu(A)\oplus \nu(A'\oplus B)$. Thus $\nu(A)$ is a
 finitely generated direct summand of $X/ \tube X$ which contains $\overline{z}=\nu(z)=\nu(a)$.
 This completes the proof.
\end{proof}

We recall that over a tame hereditary artin algebra, countably
generated Mittag-Leffler modules are just direct sums of
indecomposable finitely generated modules. So if $M$ is a countably
generated Mittag-Leffler module then, in the Theorem above, $Y$,
$X'$ and $X''$ are just the direct sums of the preinjective, regular
and preprojective indecomposable direct summands from the direct sum
decomposition of $M$.

Mittag-Leffler modules are directed unions of their countably
generated Mittag-Leffler pure submodules \cite{RG}. The theorem
above explains how the  structure of the countably generated
submodules is reflected in the whole module.

\bigskip

% = Acknowledgement ===========================================================

% = Bibliography ==============================================================

\end{document}